\documentclass[11pt]{article}
\usepackage{enumerate}
\usepackage{amssymb,a4wide,latexsym,makeidx,epsfig,fleqn}
\usepackage{amsthm}
\usepackage{amsmath}
\usepackage{enumerate}
\usepackage{graphicx}
\usepackage{float}
\usepackage{tikz}
\usepackage[colorlinks=true, linkcolor=blue, citecolor=red, urlcolor=blue]{hyperref}
\allowdisplaybreaks[4]
\newtheorem{theorem}{Theorem}[section]

\newtheorem{lemma}[theorem]{Lemma}

\newtheorem{problem}{Problem}

\begin{document}
\textwidth 150mm \textheight 225mm
\title{Spectral radius conditions for edge-disjoint spanning trees in $(k+c)$-edge-connected graphs\thanks{Supported by the National Natural Science Foundation of China (No. 12271439).}}
\author{{Yongbin Gao$^{a,b}$, Ligong Wang$^{a,b,}$\footnote{Corresponding author.}}\\
{\small $^a$ School of Mathematics and Statistics, Northwestern
Polytechnical University,}\\ {\small  Xi'an, Shaanxi 710129,
P.R. China.}\\
{\small $^b$ Xi'an-Budapest Joint Research Center for Combinatorics, Northwestern
Polytechnical University,}\\
{\small Xi'an, Shaanxi 710129,
P.R. China. }\\
{\small E-mail: gybmath@163.com, lgwangmath@163.com} }
\date{}
\maketitle
\begin{center}
\begin{minipage}{120mm}
\vskip 0.3cm
\begin{center}
{\small {\bf Abstract}}
\end{center}
{\small Let $\tau(G)$ denote the spanning tree packing number of a graph $G$. Recently, Zhang and Fan [J. Graph Theory 112 (2) (2026) 128--144] posed the problem of finding a tight spectral radius condition for an $m$-edge-connected graph $G$ to guarantee $\tau(G)\ge k$ for $k+1\le m\le 2k-1$. They solved the cases $m=k$ and $k=2, m=3$. In this paper, we study this problem for all $m=k+c$, where $1\le c\le k-1$. For $1\le c\le k-2$, we obtain a tight spectral radius condition for a $(k+c)$-edge-connected graph to contain $k$ edge-disjoint spanning trees. We also obtain a tight spectral radius condition for $(2k-1)$-edge-connected graphs. In both cases, we give graph families containing all extremal graphs, and the graphs with maximum spectral radius in these families serve as the corresponding extremal graphs. Each graph in these families consists of a large clique and a small remaining part, with certain restrictions on the edges inside the small part and between the two parts. Moreover, for the case $m=k+1$, we further determine the unique extremal graph.

\vskip 0.1in \noindent {\bf Keywords}: \ Edge-disjoint spanning trees, Edge connectivity, Spectral radius}
\end{minipage}
\end{center}

\section{Introduction}

In this paper, we consider only finite, undirected and simple graphs. Let $G$ be a graph with vertex set $V(G)$ and edge set $E(G)$. We denote the order of $G$ by $n=|V(G)|$. For a vertex $v\in V(G)$, let $N_G(v)$ and $d_G(v)$ denote the neighborhood and the degree of $v$ in $G$, respectively. For a subset $X\subseteq V(G)$, let $N_X(v)=N_G(v)\cap X$ and $d_X(v)=|N_X(v)|$, and let $e_G(X)$ denote the number of edges in the subgraph of $G$ induced by $X$. Let $\delta(G)$ denote the minimum degree of $G$. For any two disjoint subsets $X,Y \subseteq V(G)$, let $E_G(X,Y)$ be the set of edges with one end in $X$ and the other end in $Y$, and let $e_G(X,Y)=|E_G(X,Y)|$. If $X=\{v\}$, then we simply write $E_G(v,Y)$ and $e_G(v,Y)$. When the graph $G$ is clear from the context, we omit the subscript $G$. Let $\kappa'(G)$ denote the edge connectivity of $G$, and let $\tau(G)$ denote the spanning tree packing number of $G$, that is, the maximum number of edge-disjoint spanning trees contained in $G$.

The spanning tree packing number is closely related to edge connectivity. Indeed, the theorem of Nash-Williams and Tutte (see Theorem \ref{thm:nash_williams} in Section 2) implies that $\tau(G)\ge \left\lfloor \frac{\kappa'(G)}{2}\right\rfloor$, and hence every $2k$-edge-connected graph contains $k$ edge-disjoint spanning trees. While the existence of $k$ edge-disjoint spanning trees always implies $k$-edge-connectivity, the converse does not hold in general. It is therefore natural to ask what additional conditions force a graph with given edge connectivity to contain $k$ edge-disjoint spanning trees. This question was also studied by Lai and Li \cite{Lai2019}.

\begin{problem}[\cite{Lai2019}]\label{prob1}
	Find conditions on a graph $G$ with $k \le \kappa'(G) \le 2k-1$ that guarantee $\tau(G) \ge k$.
\end{problem}

Recall that an edge-cut $X$ of a graph $G$ is called \emph{$r$-essential} if $G-X$ has two components, each of which contains at least $r$ edges. A graph $G$ is called \emph{$r$-essentially $s$-edge-connected} if it has no $r$-essential edge-cut of size less than $s$. In particular, if $r=1$, we call it essential. Lai and Li \cite{Lai2019} proved that every $m$-edge-connected, essentially $h$-edge-connected graph contains $k$ edge-disjoint spanning trees whenever $m\ge k+1$ and $h\ge \frac{m^2}{m-k}-2$. They also showed that this condition is sharp.

Gu, Liu and Yu \cite{Gu2023} continued this line of research from the perspective of 2-essential edge connectivity. They proved that every $m$-edge-connected, $2$-essentially $h$-edge-connected graph $G$ contains $k$ edge-disjoint spanning trees, provided that $k+1\le m\le 2k-1$, $G$ is neither $K_5$ nor a fat triangle of multiplicity less than $k$, and
$h$ is at least a certain explicit function of $m$ and $k$. Moreover, they showed that every $3$-edge-connected, essentially $5$-edge-connected, and $2$-essentially $8$-edge-connected graph contains two edge-disjoint spanning trees.

Recently, Zhang and Fan \cite{Zhang2026} studied Problem \ref{prob1} from the perspective of spectral radius. They established a tight spectral radius condition ensuring $\tau(G)\ge k$ for $k$-edge-connected graphs and characterized the extremal graph. They also obtained a spectral condition for $k$-edge-connected graphs with fixed minimum degree $\delta(G)\ge 4k$. In addition, they further studied analogous results on 3-edge-connected graphs to guarantee $\tau(G)\ge 2$, which implies comprehensive spectral arguments on $\tau(G)\ge 2$ for $\kappa'(G)<4$.

Another motivation comes from a problem proposed by Seymour in a private communication to Cioab\v{a} (see \cite{Cioaba2012}).

\begin{problem}[\cite{Cioaba2012}]\label{prob2}
	Let $G$ be a connected graph. Determine the relationship between $\tau(G)$ and the eigenvalues of $G$.
\end{problem}

Let $A(G)$ be the adjacency matrix of a graph $G$ of order $n$, and let $\lambda_1(G) \ge \lambda_2(G) \ge \cdots \ge \lambda_n(G)$ be the eigenvalues of $A(G)$. The largest eigenvalue $\lambda_1(G)$ is called the spectral radius of $G$, denoted by $\rho(G)$. Motivated by Problem \ref{prob2}, Cioab\v{a} and Wong \cite{Cioaba2012} initiated the study of the relationship between $\tau(G)$ and adjacency eigenvalues. They obtained sufficient conditions in terms of $\lambda_2(G)$ for $d$-regular graphs to satisfy $\tau(G) \ge k$ for $k=2,3$, and proposed a conjecture for $4 \le k \le \lfloor \frac{d}{2} \rfloor$. This conjecture was later extended by Gu et al. \cite{Gu2016} to graphs with minimum degree $\delta$, and was subsequently confirmed by Liu et al. \cite{Liu2014,2Liu2014}. More precisely, they proved that if $G$ has minimum degree $\delta \ge 2k$ and $\lambda_2(G) < \delta - \frac{2k-1}{\delta+1}$, then $\tau(G) \ge k$. From the perspective of the spectral radius, Fan et al. \cite{Fan2023} proved that if $k \ge 2$ and $G$ is a connected graph with minimum degree $\delta \ge 2k$ and order $n \ge 2\delta+3$, then $\rho(G) \ge \rho(B_{n,\delta+1}^{k-1})$ implies $\tau(G) \ge k$ unless $G \cong B_{n,\delta+1}^{k-1}$, where $B_{n,\delta+1}^{k-1}$ is obtained from $K_{\delta+1} \cup K_{n-\delta-1}$ by adding $k-1$ edges joining one vertex in $K_{\delta+1}$ to $k-1$ vertices in $K_{n-\delta-1}$. More results on the relationship between eigenvalues of graphs and the spanning tree packing number can be found in \cite{Cai2026,Cioaba2022,Duan2020,Fan2025,Gao2026,Hu2023,Liu2019}.

Recently, motivated by both Problems \ref{prob1} and \ref{prob2}, Zhang and Fan \cite{Zhang2026} studied spectral radius conditions for edge-disjoint spanning trees under prescribed edge connectivity. They obtained tight spectral radius conditions for $k$-edge-connected graphs, and further posed the following problem.

\begin{problem}[\cite{Zhang2026}]\label{prob3}
	For $k \ge 2$ and $k+1 \le m \le 2k-1$, let $G$ be an $m$-edge-connected graph. Find a tight spectral radius condition that guarantees $\tau(G) \ge k$, and characterize the corresponding extremal graphs.
\end{problem}

As a first step toward Problem \ref{prob3}, Zhang and Fan \cite{Zhang2026} studied the case $k=2$ and $m=3$. Let $F_1$ be the graph defined in \cite{Zhang2026} and shown in Figure \ref{fig:F1}.

\begin{theorem}[\cite{Zhang2026}]\label{thm1}
	Let $G$ be a $3$-edge-connected graph of order $n \ge 10$. If $\rho(G) \ge \rho(F_1)$, then $G$ contains two edge-disjoint spanning trees unless $G \cong F_1$.
\end{theorem}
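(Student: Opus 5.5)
Since Theorem~\ref{thm1} is taken from \cite{Zhang2026}, I would reprove it by the standard route: the Nash-Williams--Tutte theorem (Theorem~\ref{thm:nash_williams}) gives a structural obstruction, which I then push to an extremal configuration by spectral comparison. Suppose $G$ is $3$-edge-connected, $\rho(G)\ge\rho(F_1)$ and $\tau(G)\le 1$. By Theorem~\ref{thm:nash_williams} there is a partition $\mathcal P=\{V_1,\dots,V_p\}$ of $V(G)$ with $e_G(\mathcal P)\le 2(p-1)-1=2p-3$, where $e_G(\mathcal P)$ counts edges joining different parts. Every part sends at least $3$ edges out, by $3$-edge-connectivity, so $3p/2\le e_G(\mathcal P)\le 2p-3$. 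Hence $p\ge 6$. We may also assume $e_G(\mathcal P)=2p-3$ after adding crossing edges: this can only increase $\rho$, and it keeps both the $3$-edge-connectivity and the Nash-Williams--Tutte violation.

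Next I would reduce to a canonical family. Adding all edges inside each part increases $\rho$ strictly and preserves everything, so $G$ is a spanning subgraph of a graph $H$ in which every $V_i$ is a clique and exactly $2p-3$ edges cross between parts. Order the parts so that $|V_1|\ge\cdots\ge|V_p|$. The aim is to show that the spectral radius is maximized when $|V_2|=\cdots=|V_p|=1$. I would prove this with a vertex-moving lemma: take a part $V_i$ ($i\ge2$) with $|V_i|\ge2$, and move a vertex of $V_i$ into $V_1$. Moving it while keeping its crossing edges, which then attach to $V_1$, changes the adjacency so that the Rayleigh quotient of the Perron vector increases, using that Perron entries on the big clique dominate. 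One has to check that the moved configuration still has every part with out-degree at least $3$. If it does not, I would rearrange the crossing edges before moving. I would then also show that $p$ should be as small as possible, namely $p=6$: a larger $p$ puts more vertices outside the big clique, whose order is then $n-p+1$. The comparison uses the bounds $n-p<\rho<n-p+1+\varepsilon$, and this is where $n\ge10$ enters.

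Now consider $p=6$. The graph is $K_{n-5}$ together with five outside vertices $u_1,\dots,u_5$ and $9$ edges among $\{u_i\}\cup\{K_{n-5}\}$. Here each $u_i$ has degree at least $3$, and at least $3$ of the edges go to the clique. A count gives $\sum d(u_i)\ge 15$ and $\sum d(u_i)=2e(U)+e(U,K)$ with $e(U)+e(U,K)=9$, so $e(U,K)\le 3$. Together with $e(U,K)\ge 3$ this forces $e(U,K)=3$ and $e(U)=6$. So the $u_i$ induce a $6$-edge graph on $5$ vertices, and its degree sequence plus the attachments must give each $u_i$ total degree exactly $3$. Among these finitely many configurations I would maximize $\rho$. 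Concentrating the $3$ clique edges on as few clique vertices as possible should be better, and the choice of which $u_i$ receive them should follow Perron-vector comparisons. The maximizer should be $F_1$, and a strict inequality gives uniqueness when $G\not\cong F_1$.

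The main obstacle is the reduction step. I must show rigorously that non-singleton small parts, or $p\ge7$, give strictly smaller spectral radius than $F_1$ while respecting the degree-$\ge3$ constraint. The vertex-moving argument can break $3$-edge-connectivity, so I would instead bound $\rho$ of any non-canonical $H$ directly. I would apply a quotient-matrix or edge-count estimate such as $\rho\le\sqrt{2m-n+1}$ (Hong) to compare with $\rho(F_1)>n-5$. If that fails for small $n$, I would use the explicit equitable quotient of $F_1$.
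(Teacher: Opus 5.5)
The paper only cites this result. The argument it supplies is the proof of Theorem~\ref{thm:k-1} with $k=2$ (where $4k+2=10$). That proof stops at the structure $G[U]\cong K_{n-5}$, $e(T)=6$, $e(T,U)=3$, and leaves the identification of $F_1$ to \cite{Zhang2026}. Your skeleton matches it: the Nash-Williams--Tutte partition, $p\ge 6$, saturation, and your final count. However, the reduction step, which you leave open, would not close with the tool you propose.

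First, $\rho(F_1)>n-5$ is false. $F_1$ is $K_{n-5}$ with a cubic gadget (the prism) attached at one clique vertex. Lemma~\ref{lem:spectral_upper_bound} gives $\rho(F_1)\le 1+\sqrt{n^2-14n+52}<n-5$, so in fact $n-6<\rho(F_1)<n-5$.

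Second, with the correct bound $\rho(F_1)>n-6$, Hong's $\rho\le\sqrt{2m-n+1}$ is too weak near $n=10$. Take $K_4$ and six singleton parts joined by $11$ crossing edges: six outside vertices spanning $8$ edges and sending $3$ edges to $K_4$. This is realizable as a $3$-edge-connected graph with $\tau\le 1$ and $17$ edges, and $\sqrt{34-10+1}=5$ does not beat $\rho(F_1)$. In general, this crude comparison only closes for $n\ge 15$. Computing $\rho(F_1)$ exactly via a quotient cannot rescue it, since $\rho(F_1)$ exceeds $n-6$ only by $O(n^{-2})$; the loss is on the upper-bound side.

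The missing idea is to use $\delta(G)\ge 3$. Lemmas~\ref{lem:spectral_upper_bound} and~\ref{lem:function_decreasing} give $\rho(G)\le 1+\sqrt{2e(G)-3n+4}$, hence $e(G)>\tfrac12n^2-\tfrac{11}{2}n+\tfrac{45}{2}$. For the example above this yields $1+\sqrt8<4<\rho(F_1)$. No vertex moving is then needed. With $t_1,t_2$ the numbers of trivial and nontrivial parts,
$e(G)\le\binom{n-t_1-2t_2+2}{2}+(t_2-1)+2(t_1+t_2)-3$.
This is convex, so its maximum over the relevant quadrilaterals is attained at a vertex (Theorem~\ref{thm:convex_max_vertex}). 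The binding vertex $(t_1,t_2)=(6,1)$ gives $\binom{n-6}{2}+11$, which is below the threshold exactly when $n\ge 10$. This handles non-singleton small parts and $p\ge7$ at once, forcing $t_1=5$, $t_2=1$.

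Your last step is also only a guess, and it misplaces what decides it. Since $\sum_{v\in T}d(v)=2e(T)+e(T,U)=15$, every outside vertex has degree exactly $3$. Then $3$-edge-connectivity, not Perron comparison, decides which outside vertices touch the clique. If some $v\in T$ had $d_U(v)\ge 2$, the set $T\setminus\{v\}$ would have cut size $6-2d_U(v)\le 2$.

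So three outside vertices $b,e,f$ send one edge each to the clique, all to a common clique vertex by Lemma~\ref{lem:perron_edge_shifting}. The other two, $c,d$, have all their neighbours in $T$, and $G[T]$ has degree sequence $(3,3,2,2,2)$. This leaves a genuine competitor besides $F_1$: the graph $F_2$ with $G[T]\cong K_{2,3}$, where $c,d$ are both joined to $b,e,f$.

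$F_1$ arises from $F_2$ by deleting $cf,de$ and adding $cd,ef$. Let $x$ be the unit Perron vector of $F_2$. Symmetry gives $x_c=x_d$ and $x_e=x_f$, so the Rayleigh quotient yields $\rho(F_1)-\rho(F_2)\ge 2(x_c-x_e)^2$. This is positive because $\rho(F_2)x_c=3x_e$ and $\rho(F_2)>n-6\ge 4$. Without naming $F_2$ and making this comparison, ``the maximizer should be $F_1$'' is not yet a proof of uniqueness.
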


Motivated by the above problems and results, we study Problem \ref{prob3} for all cases $m=k+c$ with $1\le c\le k-1$. We establish tight spectral radius conditions for $(k+c)$-edge-connected graphs to guarantee $\tau(G) \ge k$. In both cases, the extremal graphs are described by explicit extremal families. Moreover, in the case $c=1$, we further determine the unique extremal graph.

We now introduce two graph families in our main results.

Let $1\le c\le k-2$. Let $\mathcal{H}_{n,k}^{c}$ be the class of all $(k+c)$-edge-connected graphs $H$ of order $n$ for which there exists a partition $V(H)=U\cup T$ such that $|U|=n-2c-2$, $|T|=2c+2$, $H[U]\cong K_{n-2c-2}$, $2c^2+2c+1\le e_H(T)\le 2c^2+3c+1$ and $e_H(T,U)=k(2c+2)-1-e_H(T)$. Let $H_{n,k}^{c,*}$ be a graph in $\mathcal{H}_{n,k}^{c}$ with maximum spectral radius.

For the case $c=k-1$, let $\mathcal{H}_{n,k}$ be the class of all $(2k-1)$-edge-connected graphs $H$ of order $n$ for which there exists a partition $V(H)=U\cup T$ such that $|U|=n-2k-1$, $|T|=2k+1$, $H[U]\cong K_{n-2k-1}$, $e_H(T)=k(2k-1)$ and $e_H(T,U)=2k-1$. Let $H^*_{n,k}$ be a graph in $\mathcal{H}_{n,k}$ with maximum spectral radius.

We can now present our main results.

\begin{theorem}\label{thm:k+c}
	Let $k, c$ be integers with $1 \le c \le k-2$, and let $G$ be a $(k+c)$-edge-connected graph of order $n\ge (c+2)k-c^2+c+2$. If $\rho(G)\ge \rho(H_{n,k}^{c,*})$, then $\tau(G)\ge k$ unless $G$ is a graph in $\mathcal{H}_{n,k}^{c}$ with maximum spectral radius, that is, $G\in \mathcal{H}_{n,k}^{c}$ and $\rho(G)=\rho(H_{n,k}^{c,*})$.
\end{theorem}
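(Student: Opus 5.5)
We argue by contradiction. Suppose $G$ is $(k+c)$-edge-connected of order $n\ge (c+2)k-c^2+c+2$, $\rho(G)\ge \rho(H_{n,k}^{c,*})$, and $\tau(G)\le k-1$. By the Nash-Williams--Tutte theorem (Theorem \ref{thm:nash_williams}) there is a partition $\mathcal{P}=\{V_1,\dots,V_p\}$ of $V(G)$ with
\[
\sum_{1\le i<j\le p} e_G(V_i,V_j)\le k(p-1)-1 .
\]
Since $G$ is $(k+c)$-edge-connected, each $V_i$ sends at least $k+c$ edges out, so $(k+c)p/2\le k(p-1)-1$. This gives $p\ge \frac{2k+2}{k-c}>2$, and $p$ is bounded above only through $n$. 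Adding edges inside parts and between parts (as long as the crossing count stays $\le k(p-1)-1$) only increases $\rho$ and preserves both $(k+c)$-edge-connectivity and $\tau\le k-1$. So we may assume every $G[V_i]$ is complete and equality $\sum e(V_i,V_j)=k(p-1)-1$ holds.

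The next step is to show that in a spectrally extremal configuration, exactly one part is large and the others are small. We compare $\rho(G)$ with a quotient/edge-count bound: $e(G)\le \binom{n_1}{2}+\sum_{i\ge2}\binom{n_i}{2}+k(p-1)-1$, and use Hong's bound $\rho\le\sqrt{2e-n+1}$. The lower bound is $\rho(H_{n,k}^{c,*})>\rho(K_{n-2c-2})=n-2c-3$. Together these force $n_1\ge n-2c-2$ roughly, i.e.\ $|T|:=n-n_1\le 2c+2$; this is where the hypothesis on $n$ is used.

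Then we analyse the small part $T=V_2\cup\dots\cup V_p$ combinatorially. Each vertex of a part $V_i\subseteq T$ of size $s$ has degree at least $k+c$, so if $s\le k+c$ it needs about $k+c-s+1$ edges leaving $V_i$. Combining this with the total crossing budget $k(p-1)-1$ and $|T|\le 2c+2$ forces a specific structure. The case analysis I expect is: either all $V_i$ in $T$ are singletons, or merging parts yields the condition $e(T)\in[2c^2+2c+1,\,2c^2+3c+1]$ with $|T|=2c+2$ and $e(T,U)=k(2c+2)-1-e(T)$. Configurations with $|T|<2c+2$ would violate edge-connectivity: with $t=|T|$ singletons, $t(k+c)-2e(T)\le e(T,U)$ and $e(T)\le\binom t2$ force $t\ge 2c+2$ under the budget $kt-1$. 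Configurations with $|T|=2c+2$ but the wrong edge count are either infeasible or strictly dominated.

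The main obstacle is the middle step: rigorously excluding partitions where two or more parts are moderately large, or where $|T|>2c+2$. For this I would first use the Perron vector. I would show that vertices in the big clique have entries close to $1$ and vertices in $T$ have entries of order $k/n$. Then I would perform edge-shifting: moving vertices of $T$ into $U$, or rerouting crossing edges toward $U$, strictly increases $\rho$ while staying within the extremal class. This should reduce the problem to $G\in\mathcal{H}_{n,k}^c$, and the inequality $\rho(G)\ge\rho(H^{c,*}_{n,k})$ then forces $\rho(G)=\rho(H^{c,*}_{n,k})$. The careful estimate of $\rho$ for two-large-part configurations (via the quotient matrix's characteristic polynomial compared at $x=n-2c-3$) is where the bound on $n$ enters, and it is the step I expect to be hardest.
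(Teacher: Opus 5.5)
Your overall frame (a Nash--Williams partition, saturating edges, comparing an edge count with a spectral upper bound, and a final degree count on $T$) is reasonable. Your last computation, for the shape ``one clique part plus $t$ singletons'', is correct. The gap is the step you yourself flag: you never exclude the other partition shapes (all parts trivial, several nontrivial parts, $t\ne 2c+3$). Moreover, the inequality you propose for this, Hong's bound $\rho\le\sqrt{2e-n+1}$, is too weak at the stated order. Combined with $\rho>n-2c-3$ it only yields $e(G)>\frac12((n-2c-3)^2+n-1)$, which is roughly $2(c+1)k$ below $e(H_{n,k}^{c,*})$. Already for $c=1$, $k=3$, $n=11$ the all-singleton partition gives only $e(G)\le kn-k-1=29$, and $\sqrt{2\cdot 29-11+1}=\sqrt{48}>6=n-2c-3$, so nothing is contradicted. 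Similarly, take the shape with $t=2c+2$ parts: one part of size $n-2c-3$, one of size $3$, and $2c$ singletons. For it, the edge-count-plus-Hong argument fails unless $n\ge (2c+1)k+c+4$, which exceeds $(c+2)k-c^2+c+2$ for every $c\ge 1$. So the assertion that these bounds ``force $n_1\ge n-2c-2$ roughly'' does not hold in the required range of $n$.

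The missing ingredient is the minimum-degree version of the bound, $\rho\le\frac{\delta-1}{2}+\sqrt{2e-\delta n+\frac{(\delta+1)^2}{4}}$, applied with $\delta\ge k+c$ (the right-hand side is decreasing in $\delta$). It gives $e(G)>\frac12n^2-\frac{4c+5}{2}n+(c+1)k+3c^2+6c+3$, only about $(c+1)k$ below $e(H_{n,k}^{c,*})$. This is just strong enough at the threshold: for $c=1$, $k=3$, $n=11$, $e=29$ it gives exactly $\rho\le 6$. With this in hand the paper needs no Perron-vector analysis. It bounds $e(G)\le\sum_i\binom{|X_i|}{2}+kt-k-1$ and parametrizes the partition by $(r,t)$ or by the numbers $(t_1,t_2)$ of trivial and nontrivial parts. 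The resulting quadratic is convex, so only the corners of a quadrilateral need checking. This gives successively $t_2\ge 1$, $t\ge 2c+3$, $t_1\ge 2c+2$ and $t=2c+3$. Your fallback plan (Perron entries near $1$ on the clique, shifting vertices of $T$ into $U$, quotient-matrix comparison at $x=n-2c-3$) is not carried out and has its own problem. Moving a vertex between $T$ and $U$ changes the Nash--Williams partition. It is then unclear that the new graph still has $\tau\le k-1$ and is still $(k+c)$-edge-connected, i.e., that it stays in the class over which you maximize $\rho$.
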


\begin{theorem}\label{thm:k-1}
	Let $k\ge 2$ be a integer, and let $G$ be a $(2k-1)$-edge-connected graph of order $n\ge 4k+2$. If $\rho(G)\ge \rho(H^*_{n,k})$, then $\tau(G)\ge k$ unless $G$ is a graph in $\mathcal{H}_{n,k}$ with maximum spectral radius, that is, $G\in \mathcal{H}_{n,k}$ and $\rho(G)=\rho(H^*_{n,k})$.
\end{theorem}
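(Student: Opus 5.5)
We argue by contradiction. Suppose $G$ is $(2k-1)$-edge-connected of order $n\ge 4k+2$, $\rho(G)\ge\rho(H^*_{n,k})$ and $\tau(G)\le k-1$. By the Nash-Williams--Tutte theorem (Theorem \ref{thm:nash_williams}) there is a partition $\mathcal P=\{V_1,\dots,V_p\}$ of $V(G)$ with $e_G(\mathcal P)\le k(p-1)-1$, where $e_G(\mathcal P)$ counts the crossing edges. Since every part sends at least $2k-1$ edges out, $(2k-1)p\le 2e_G(\mathcal P)\le 2k(p-1)-2$, which gives $p\ge 2k+2$. We then pass to a maximal supergraph: add all edges inside each part, and add crossing edges while keeping $e(\mathcal P)=k(p-1)-1$. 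Adding edges only raises $\rho$ and keeps $\tau\le k-1$ and $(2k-1)$-edge-connectivity, so we may assume $G[V_i]$ is complete for every $i$ and $e_G(\mathcal P)=k(p-1)-1$.

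Next we show the extremal structure has exactly one large part and $p-1$ singletons, with $p=2k+2$. Let $V_1$ be the largest part. First we compare with the configuration in which all parts other than $V_1$ are singletons: moving vertices of small nontrivial parts into $V_1$ while redistributing crossing edges keeps both the edge-connectivity and the count $k(p-1)-1$. Through a Perron-vector or edge-count estimate, in the style of the standard bound $\rho\le\sqrt{2m-n+1}$ (Hong) or a quotient-matrix comparison, this does not decrease $\rho$. So $G$ is $K_{n-p+1}$ together with a set $T'$ of $p-1$ vertices, where $e(T')+e(T',V_1)=k(p-1)-1$ and each vertex of $T'$ has degree at least $2k-1$.

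We then show $p-1=2k+1$. If $p-1\ge 2k+2$, then $|V_1|$ is smaller and the total edge count of $G$ is at most $\binom{n-p+1}{2}+k(p-1)-1$. Since $n\ge 4k+2$, a standard bound gives $\rho(G)<n-p+1+O(1)$, which is less than $\rho(H^*_{n,k})\ge n-2k-2$, the latter coming from its clique $K_{n-2k-1}$ plus the attached edges. This is the step where the hypothesis $n\ge 4k+2$ is used, via a comparison of the form $\rho\le \frac{\delta-1}{2}+\sqrt{2m-n\delta+\frac{(\delta+1)^2}{4}}$ (Hong--Shu--Fang).

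With $|T|=2k+1$, we use the degree condition $d(v)\ge 2k-1$ for each $v\in T$, together with $e(T)+e(T,U)=2k^2-1$. We show that moving crossing edges into $T$ increases $\rho$ whenever possible, since the Perron entries on $T$ are at most those on $U$ only in a controlled way. More exactly, we would use the fact that replacing an edge $uv$ ($u\in U$) by $wv$ with larger Perron weight raises $\rho$. This forces $e(T,U)$ to be as small as edge-connectivity allows. The cut $(T,U)$ must have size at least $2k-1$, which forces $e(T,U)=2k-1$ and $e(T)=k(2k-1)$, so $G\in\mathcal H_{n,k}$. Then $\rho(G)\ge\rho(H^*_{n,k})$ forces equality, which is the stated exception.

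The main obstacle is the reduction to singletons plus one clique. Rearranging edges must preserve $(2k-1)$-edge-connectivity, and it must be done so that $\rho$ provably does not drop. This needs careful Perron-vector comparisons (Kelmans-type transformations) rather than crude edge counting.
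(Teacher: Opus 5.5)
Your skeleton matches the paper's: a Nash-Williams--Tutte partition, $p\ge 2k+2$ from the cut condition, saturating edges, and ending with a clique plus $2k+1$ singletons. However, there is a genuine gap. The central step, reducing to one nontrivial part plus singletons, is only asserted. You never say how vertices are moved into $V_1$, or how crossing edges are ``redistributed'' so that $(2k-1)$-edge-connectivity and the count $k(p-1)-1$ both survive. You also give no argument that $\rho$ does not drop, and you flag this yourself as the main obstacle.

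Your diagnosis that this needs Kelmans-type Perron comparisons rather than edge counting is misplaced: the paper eliminates every other configuration by counting alone.
\begin{itemize}
\item Lower bound. Since $K_{n-2k-1}$ is a proper subgraph of $H^*_{n,k}$, we have $\rho(G)>n-2k-2$. Lemmas \ref{lem:spectral_upper_bound} and \ref{lem:function_decreasing} with $\delta(G)\ge 2k-1$ then give $e(G)>\frac{1}{2}n^2-\frac{4k+3}{2}n+4k^2+3k+\frac{1}{2}$.
\item Upper bound. Let $t_1,t_2$ be the numbers of trivial and nontrivial parts. Lemma \ref{lem:combinatorial_bound} gives $e(G)\le\binom{n-t_1-2(t_2-1)}{2}+(t_2-1)+k(t_1+t_2)-k-1=:g(t_1,t_2)$, and $g$ is convex.
\item Conclusion. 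Evaluate $g$ at the vertices of the feasible quadrilaterals, and use the trivial bound $e(G)\le kn-k-1$ when $t_2=0$. This rules out $t_2=0$, $t_1\le 2k$ and $t\ge 2k+3$, leaving $t_1=2k+1$, $t_2=1$ with no transformation at all.
\end{itemize}
You already use this kind of count in your third step. The missing idea is that the same count, applied through $g(t_1,t_2)$ rather than after assuming singletons, handles several nontrivial parts directly.

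Even in that third step, ``$\rho(G)<n-p+1+O(1)$'' is too loose. For $p=2k+3$ one has $n-p+1=n-2k-2$, so the constant must be controlled exactly. The edge count $\binom{n-2k-2}{2}+k(2k+2)-1$ falls short of the required lower bound by $n-4k-\frac{3}{2}>0$, and this is where $n\ge 4k+2$ is used.

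Your final step is also incorrect as written.
\begin{itemize}
\item The crossing count is $e(T)+e(T,U)=k(2k+1)-1=2k^2+k-1$, not $2k^2-1$. With your value, $e(T,U)=2k-1$ would give $e(T)=2k^2-2k\ne k(2k-1)$.
\item The claim that moving crossing edges into $T$ raises $\rho$ is unsupported. By Lemma \ref{lem:perron_edge_shifting}, replacing $vu$ ($u\in U$) by $vw$ ($w\in T$) is guaranteed to help only when $\mathbf{x}_w\ge \mathbf{x}_u$. You give no reason for this, and since the clique side is the denser side, one expects the opposite.
\end{itemize}
No spectral argument is needed here. We have $2e(T)+e(T,U)=\sum_{v\in T}d_G(v)\ge(2k+1)(2k-1)$. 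Combined with $e(T)+e(T,U)=2k^2+k-1$, this gives $e(T)\ge 2k^2-k$, hence $e(T,U)\le 2k-1$. Edge-connectivity gives $e(T,U)\ge 2k-1$, so $e(T,U)=2k-1$ and $e(T)=k(2k-1)$.
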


Theorem \ref{thm:k+c} gives a tight spectral radius condition in the range $1\le c\le k-2$. For the case $c=1$, we determine the unique extremal graph. We now describe this graph explicitly, as shown in Figure \ref{fig:Fnk}. Let $k\ge 3$ and $n\ge 3k+2$. Let $U=\{u_1,u_2,\ldots,u_{n-4}\}$ and $T=\{v_1,v_2,v_3,v_4\}$ be disjoint vertex sets. Let $F_{n,k}$ be the graph with vertex set $U\cup T$ such that $F_{n,k}[U]\cong K_{n-4}$, $F_{n,k}[T]\cong K_4-e$ with missing edge $v_1v_2$, and $E(T,U)=\{v_i u_j : i\in\{1,2\},\ 1\le j\le k-1\}\cup\{v_i u_j : i\in\{3,4\},\ 1\le j\le k-2\}$.

\begin{figure}[htbp]
	\centering
	\tikzset{
		vertex/.style={circle, fill=black, inner sep=0pt, minimum size=7pt},
		edot/.style={circle, fill=black, inner sep=0pt, minimum size=2.2pt}
	}
	
	\begin{minipage}[t]{0.48\textwidth}
		\centering
		\begin{tikzpicture}[line width=1pt]
			\draw (0, 4.464) circle (1cm);
			\node[font=\large] at (0, 4.464) {$K_{n-5}$};
			
			\node[vertex] (a) at (0, 3.464) {};
			\node[vertex] (b) at (0, 2.314) {};
			\node[vertex] (c) at (-1, 0.577) {};
			\node[vertex] (d) at (1, 0.577) {};
			\node[vertex] (e) at (-2, 0) {};
			\node[vertex] (f) at (2, 0) {};
			
			\draw (a) -- (b);
			\draw (b) -- (c);
			\draw (b) -- (d);
			\draw (c) -- (d);
			\draw (a) -- (e);
			\draw (a) -- (f);
			\draw (c) -- (e);
			\draw (d) -- (f);
			\draw (e) -- (f);
		\end{tikzpicture}
		\caption{The graph $F_1$}
		\label{fig:F1}
	\end{minipage}
	\hfill
	\begin{minipage}[t]{0.48\textwidth}
		\centering
		\begin{tikzpicture}[line width=1pt]
			\node[vertex, label=below left:$v_4$] (v4) at (-1.3, 0) {};
			\node[vertex, label=below right:$v_3$] (v3) at (1.3, 0) {};
			\node[vertex, label=above left:$v_1$] (v1) at (-1.8, 1.3) {};
			\node[vertex, label=above right:$v_2$] (v2) at (1.8, 1.3) {};
			
			\draw (0, 4.0) circle (1.5cm);
			\node[font=\large] at (0, 4.5) {$K_{n-4}$};
			
			\node[vertex, label=above:$u_1$] (u1) at (-0.9, 3.10) {};
			\node[vertex, label=above:$u_{k-2}$] (u2) at (0.15, 3.10) {};
			\node[vertex, label=above:$u_{k-1}$] (u3) at (0.95, 3.10) {};
			
			\node[edot] at (-0.55, 3.10) {};
			\node[edot] at (-0.42, 3.10) {};
			\node[edot] at (-0.29, 3.10) {};
			
			\draw (v1) -- (v3);
			\draw (v1) -- (v4);
			\draw (v2) -- (v3);
			\draw (v2) -- (v4);
			\draw (v3) -- (v4);
			
			\draw (v1) -- (u1);
			\draw (v1) -- (u2);
			\draw (v1) -- (u3);
			
			\draw (v2) -- (u1);
			\draw (v2) -- (u2);
			\draw (v2) -- (u3);
			
			\draw (v3) -- (u1);
			\draw (v3) -- (u2);
			
			\draw (v4) -- (u1);
			\draw (v4) -- (u2);
		\end{tikzpicture}
		\caption{The graph $F_{n,k}$ with vertex labels}
		\label{fig:Fnk}
	\end{minipage}
\end{figure}

\begin{theorem}\label{thm:c1}
	Let $k\ge 3$ be a integer, and let $G$ is a $(k+1)$-edge-connected graph of order $n\ge 3k+2$. If $\rho(G)\ge \rho(F_{n,k})$, then $\tau(G)\ge k$ unless $G\cong F_{n,k}$.
\end{theorem}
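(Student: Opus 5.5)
Theorem \ref{thm:c1} is the special case $c=1$ of Theorem \ref{thm:k+c}, so the plan is to start from that theorem and then identify the maximizer inside $\mathcal{H}_{n,k}^{1}$. For $c=1$ the bound $n\ge (c+2)k-c^2+c+2=3k+2$ agrees with our hypothesis. First I check that $F_{n,k}\in\mathcal{H}_{n,k}^{1}$:
\begin{itemize}
\item $|T|=4$ and $F_{n,k}[U]\cong K_{n-4}$;
\item $e(T)=5$ lies in $[2c^2+2c+1,\,2c^2+3c+1]=[5,6]$;
\item $e(T,U)=2(k-1)+2(k-2)=4k-6=4k-1-5$;
\item $F_{n,k}$ is $(k+1)$-edge-connected: each $v_i$ has degree $k+1$, and a short cut check handles sets meeting both $T$ and $U$.
\end{itemize}
Hence $\rho(H_{n,k}^{1,*})\ge \rho(F_{n,k})$. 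Also $\tau(F_{n,k})<k$ by Theorem \ref{thm:nash_williams}, since the partition of $V$ into $U$ and the four singletons of $T$ crosses $e(T)+e(T,U)=4k-1<4k$ edges. So if $\rho(G)\ge\rho(F_{n,k})$ and $\tau(G)<k$, Theorem \ref{thm:k+c} cannot be applied directly, because we only know $\rho(G)\ge \rho(F_{n,k})$, not $\rho(G)\ge\rho(H^{1,*}_{n,k})$. I would therefore use the structural part of its proof. That argument, run with the threshold $\rho(F_{n,k})$, should show that $G\in\mathcal{H}_{n,k}^{1}$, or else give a strict contradiction from $\rho(G)<\rho(F_{n,k})$. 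Alternatively, and more cleanly, I would prove directly that every $H\in\mathcal{H}_{n,k}^1$ satisfies $\rho(H)\le\rho(F_{n,k})$, with equality only if $H\cong F_{n,k}$. Then $\rho(H^{1,*}_{n,k})=\rho(F_{n,k})$, and Theorem \ref{thm:k+c} gives the result.

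So the core task is to maximize $\rho$ over $\mathcal{H}_{n,k}^1$. There are two cases, $e(T)=6$ and $e(T)=5$.
\begin{itemize}
\item If $e(T)=6$, then $T$ induces $K_4$ and $e(T,U)=4k-7$. Each $v_i$ needs $d_U(v_i)\ge k+1-3=k-2$, and $4(k-2)=4k-8$, so one vertex gets $k-1$ and the rest get $k-2$.
\item If $e(T)=5$, then $T$ induces $K_4-e$ with missing edge $v_1v_2$ and $e(T,U)=4k-6$. The degree constraints force $d_U(v_1),d_U(v_2)\ge k-1$ and $d_U(v_3),d_U(v_4)\ge k-2$, summing to exactly $4k-6$, so all are equalities.
\end{itemize}
In each case, with the $U$-degrees of $T$ fixed, I expect $\rho$ to be maximized when the neighborhoods in $U$ are nested. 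This follows from the Kelmans-type / edge-switching lemma: moving an edge $v_iu$ to $v_iu'$ when $x_{u'}\ge x_u$ strictly increases $\rho$, and vertices of $U$ with more $T$-neighbors have larger Perron entries. Nesting yields $F_{n,k}$ in the second case and a specific graph $F'$ in the first case. The equality characterization comes from the strictness of the switching lemma.

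The main obstacle is comparing $\rho(F')$ with $\rho(F_{n,k})$. The two graphs have the same total edge count, $e(T)+e(T,U)=4k-1$, and differ in how edges are split between the inside of $T$ and $T$--$U$. My first attempt would be to use Perron vector entries. Write $x_u$ for the entries on the clique vertices, which are near $1$ after normalization, and $x_v=O(k/n)$ for the entries on $T$, which are small for large $n$. Transforming $F'$ into $F_{n,k}$ amounts to deleting an edge inside $T$ (say $v_1v_2$) and adding an edge from $v_2$ to $U$. The change in the Rayleigh quotient is roughly $2x_{v_2}(x_u-x_{v_1})>0$, because $x_u\gg x_{v_1}$. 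To make this rigorous under only $n\ge 3k+2$, I would instead use equitable quotient matrices: write the characteristic polynomials of both graphs via their quotient matrices and show that $\varphi_{F'}(x)-\varphi_{F_{n,k}}(x)>0$ for $x\ge\rho(F')$, using the bound $\rho\ge n-5$. This polynomial estimate is where I expect most of the computational effort to go.
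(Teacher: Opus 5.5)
Your reduction and case analysis follow the paper. Since $F_{n,k}\in\mathcal{H}^1_{n,k}$, it suffices to show $F_{n,k}$ is the unique maximizer of $\rho$ in $\mathcal{H}^1_{n,k}$ and then invoke Theorem \ref{thm:k+c}. The forced $U$-degrees in the cases $e(T)=6$ and $e(T)=5$, together with nesting via Lemma \ref{lem:perron_edge_shifting}, reduce everything to $F'_{n,k}$ versus $F_{n,k}$. Two small repairs are needed there:
\begin{itemize}
\item Fix the ordering $\mathbf{x}_{u_1}\ge\cdots\ge\mathbf{x}_{u_{n-4}}$ by the Perron vector of a maximizer, and show that each $N_U(v)$ is an initial segment. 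Your remark that vertices of $U$ with more $T$-neighbours have larger entries is neither needed nor justified in general: for $u,u'\in U$ one has $(\rho+1)(\mathbf{x}_u-\mathbf{x}_{u'})=\sum_{w\in N_T(u)}\mathbf{x}_w-\sum_{w\in N_T(u')}\mathbf{x}_w$, which is controlled by entry sums, not counts.
\item Each switched graph must stay in $\mathcal{H}^1_{n,k}$. It does, because switching preserves degrees in $T$, and a clique on $U$ with $d(v)\ge k+1$ on $T$ already gives $(k+1)$-edge-connectivity (Lemma \ref{(k+1)_edge_connected}).
\end{itemize}

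The genuine gap is the comparison $\rho(F_{n,k})>\rho(F'_{n,k})$. This is the only new content of the theorem, and you leave it as an unexecuted quotient-polynomial estimate. The Perron-vector idea you set aside closes it exactly.

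For the Perron vector $\mathbf{x}$ of $F'_{n,k}$, $\mathbf{x}^{T}A(F_{n,k})\mathbf{x}-\mathbf{x}^{T}A(F'_{n,k})\mathbf{x}=2\mathbf{x}_{v_2}(\mathbf{x}_{u_{k-1}}-\mathbf{x}_{v_1})$. So you need only $\mathbf{x}_{u_{k-1}}\ge\mathbf{x}_{v_1}$, not $\mathbf{x}_u\gg\mathbf{x}_{v_1}$. Lemma \ref{lem:perron_edge_shifting} with $u=u_{k-1}$, $v=v_1$ and moved neighbour $v_2$ then gives strict inequality.

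By symmetry, $\mathbf{x}_{v_2}=\mathbf{x}_{v_3}=\mathbf{x}_{v_4}=:x_v$ and $\mathbf{x}_{u_k}=\cdots=\mathbf{x}_{u_{n-4}}=:x_u$. Subtract the eigen-equations at $v_1$ and $u_{k-1}$ directly. Then subtract them again after using the equations at $v_2$ and $u_k$ to replace $\sum_{l\le k-2}\mathbf{x}_{u_l}+\mathbf{x}_{v_1}$ by $(\rho-2)x_v$ and $\sum_{l\le k-1}\mathbf{x}_{u_l}$ by $(\rho-n+k+4)x_u$. This gives
\[
(\rho+1)(\mathbf{x}_{u_{k-1}}-\mathbf{x}_{v_1})=(n-k-3)x_u-3x_v,\qquad
\rho(\mathbf{x}_{u_{k-1}}-\mathbf{x}_{v_1})=(\rho-5)x_v+(2n-2k-7-\rho)x_u .
\]
Suppose $\mathbf{x}_{u_{k-1}}<\mathbf{x}_{v_1}$.
\begin{itemize}
\item Since $\rho>n-5\ge 6$, the second identity forces $2n-2k-7-\rho<0$ and $x_u>\frac{\rho-5}{\rho-2n+2k+7}\,x_v$.
\item The first identity gives $x_u<\frac{3}{n-k-3}\,x_v$.
\end{itemize}
Combining the two yields $(\rho+1)(n-k-6)<0$, which is impossible since $n-k-6\ge 2k-4>0$. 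This is exactly the paper's Lemma \ref{lem:F2_F4_compare}, and it holds for all $n\ge 3k+2$ with no asymptotics.
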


The rest of this paper is organized as follows. In Section 2, we present some known results that will be used in the proofs of our main results. In Section 3, we prove Theorems \ref{thm:k+c} and \ref{thm:k-1}. In Section 4, we prove Theorem \ref{thm:c1}. In Section 5, we give some concluding remarks.

\section{Preliminaries}

In this section, we present some known results to be used in the proofs of our main results.

We begin with the classical theorem of Nash-Williams \cite{Nash-Williams1961} and Tutte \cite{Tutte1961}, which characterizes graphs containing $k$ edge-disjoint spanning trees. For any partition $\pi$ of $V(G)$, let $e_G(\pi)$ denote the number of edges of $G$ with ends in distinct parts of $\pi$.

\begin{theorem}[\cite{Nash-Williams1961,Tutte1961}]\label{thm:nash_williams}
	A connected graph $G$ has $k$ edge-disjoint spanning trees if and only if
	\[
	e_G(\pi) \ge k(t-1)
	\]
	for every partition $\pi$ of $V(G)$, where $t$ is the number of parts in the partition $\pi$.
\end{theorem}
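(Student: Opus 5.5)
The plan is to prove the two directions separately. The forward direction is a counting argument. The converse comes from the matroid union (Edmonds–Nash-Williams) rank formula applied to $k$ copies of the cycle matroid of $G$.

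\emph{Necessity.} Suppose $T_1,\dots,T_k$ are edge-disjoint spanning trees and $\pi=\{V_1,\dots,V_t\}$ is a partition of $V(G)$. Contracting each part $V_i$ to a single vertex turns each $T_j$ into a connected spanning (multi)graph on $t$ vertices. Hence each $T_j$ has at least $t-1$ edges joining distinct parts. The trees are edge-disjoint, so summing over $j$ gives $e_G(\pi)\ge k(t-1)$.

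\emph{Sufficiency.} Let $M$ be the cycle (graphic) matroid of $G$ on ground set $E=E(G)$. Its rank function is $r(F)=n-c(F)$, where $c(F)$ is the number of components of the spanning subgraph $(V(G),F)$. By the matroid union theorem, the union $M^{(k)}=M\vee\cdots\vee M$ ($k$ copies) has rank
\[
r_k(E)=\min_{F\subseteq E}\bigl(|E\setminus F|+k\,r(F)\bigr).
\]
An independent set of $M^{(k)}$ is a union of $k$ disjoint forests. Since $G$ is connected, $G$ has $k$ edge-disjoint spanning trees if and only if $r_k(E)=k(n-1)$. In that case any maximum independent set splits into $k$ forests, each of size $n-1$.

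It therefore suffices to show $|E\setminus F|+k\,r(F)\ge k(n-1)$ for every $F\subseteq E$. Fix $F$ and let $\pi$ be the partition of $V(G)$ into the vertex sets of the $t=c(F)$ components of $(V(G),F)$. Every edge of $G$ joining two distinct parts of $\pi$ lies outside $F$, so $|E\setminus F|\ge e_G(\pi)\ge k(t-1)$ by hypothesis. Since $r(F)=n-t$, we get $|E\setminus F|+k(n-t)\ge k(t-1)+k(n-t)=k(n-1)$, as required.

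The main obstacle is the matroid union rank formula itself. If it may not be quoted, I would prove the needed instance directly by Edmonds' augmenting-path argument. Take a family of $k$ edge-disjoint forests with maximum total size. If some forest is not spanning, search for an augmenting sequence of edge exchanges between the forests. If no such sequence exists, the set of edges reachable in the exchange digraph spans a vertex partition $\pi$ for which $e_G(\pi)<k(t-1)$, contradicting the hypothesis. Carrying out this exchange argument carefully is the only nontrivial step; everything else is the counting above.
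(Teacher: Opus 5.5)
The paper gives no proof of this statement. It quotes the result from Nash-Williams and Tutte and uses it as a black box in both directions. The easy direction certifies that graphs in $\mathcal{H}_{n,k}^{c}$ and $\mathcal{H}_{n,k}$ have $\tau\le k-1$. The hard direction, in contrapositive form, extracts a partition with $e_G(\pi)\le k(t-1)-1$ from $\tau(G)\le k-1$.

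So there is no in-paper argument to compare against, and your proposal is a correct derivation modulo the matroid union theorem. Necessity is the standard contraction count. In the sufficiency direction every step checks out:
\begin{itemize}
\item $r_k(E)\le k\,r(E)=k(n-1)$ always holds.
\item An independent set of $M^{(k)}$ of size $k(n-1)$ splits into $k$ disjoint forests, each forced to have exactly $n-1$ edges.
\item Taking $\pi$ to be the vertex sets of the components of $(V(G),F)$ is exactly right: it gives $r(F)=n-t$ and puts every crossing edge in $E\setminus F$.
\end{itemize}
The trade-off is that all the depth is moved into the matroid union rank formula, which for graphic matroids is essentially the statement itself. The only graph-specific content of your argument is the choice of partition.

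If you use the augmenting-path fallback instead, this is the step to make explicit.
\begin{itemize}
\item Start the search from edges lying in no forest. One exists, since the singleton partition gives $|E|\ge k(n-1)$.
\item Let $F$ be the set of reachable edges.
\item Use the absence of an augmenting path to show that, for every forest $F_i$, $F_i\cap F$ connects the ends of every edge of $F$. This step needs the usual shortest-path argument to guarantee that augmentation preserves acyclicity.
\item Then $|F_i\cap F|=n-c(F)$ for each $i$, and the forests have total size $|E\setminus F|+k\,(n-c(F))$.
\item If this total is below $k(n-1)$, the component partition of $(V(G),F)$ has fewer than $k(c(F)-1)$ crossing edges, contradicting the hypothesis.
\end{itemize}
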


We next present some useful properties of the spectral radius and the Perron vector of a graph.

\begin{lemma}\label{lem:spectral_subgraph}
	Let $G$ be a connected graph and let $H$ be a subgraph of $G$. Then $\rho(H) \le \rho(G)$, with equality if and only if $H \cong G$.
\end{lemma}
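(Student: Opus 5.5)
The claim is the standard monotonicity of the spectral radius under taking subgraphs: $\rho(H)\le\rho(G)$, with equality exactly when $H\cong G$. I will use the Perron--Frobenius theorem for the nonnegative irreducible matrix $A(G)$ together with the Rayleigh quotient characterization $\rho(M)=\max_{\|x\|=1}x^{T}Mx$ for symmetric $M$. I read the equality case as $H=G$ (same vertex and edge sets); for a subgraph of a finite graph this is equivalent to $H\cong G$, since an isomorphic subgraph has the same numbers of vertices and edges as $G$ and therefore cannot be proper.

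\emph{Reduction to a spanning subgraph.} First I extend $H$ to a spanning subgraph $H'$ of $G$ by adding the vertices of $V(G)\setminus V(H)$ as isolated vertices. This adds only zero eigenvalues, so $\rho(H')=\rho(H)$. Then $A(H')\le A(G)$ entrywise.

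\emph{The inequality.} Let $y$ be a unit Perron vector of $A(H')$, which can be taken nonnegative by Perron--Frobenius for nonnegative symmetric matrices. Then
\[
\rho(H)=y^{T}A(H')y\le y^{T}A(G)y\le \rho(G),
\]
where the middle inequality uses $A(H')\le A(G)$ entrywise together with $y\ge 0$, and the last uses the Rayleigh quotient.

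\emph{The equality case.} This is the only step needing care. Suppose $\rho(H)=\rho(G)$. Then $y$ attains the maximum of the Rayleigh quotient of $A(G)$, so $y$ is an eigenvector of $A(G)$ for $\rho(G)$. Because $G$ is connected, $A(G)$ is irreducible, and Perron--Frobenius says the eigenspace of $\rho(G)$ is spanned by a strictly positive vector; hence $y>0$. Equality in the middle step gives
\[
y^{T}\bigl(A(G)-A(H')\bigr)y=0 .
\]
Every entry of $A(G)-A(H')$ is nonnegative and every entry of $y$ is strictly positive, so $A(G)=A(H')$. Thus $H'=G$, and in particular $V(H)=V(G)$, because if $H$ had omitted a vertex $v$, the isolated vertex $v$ in $H'$ would contradict $G$ being connected on at least two vertices. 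Hence $H=G$.

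For the converse, if $H\cong G$ then the two graphs have the same spectrum, so $\rho(H)=\rho(G)$. This completes the plan; the only delicate point is the equality argument, which rests on the Perron vector of the connected graph $G$ being strictly positive.
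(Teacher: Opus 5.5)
Your proof is correct and complete. The paper gives no proof of this lemma; it is quoted as a standard fact in the preliminaries. Your argument is exactly the standard proof one would cite:
\begin{itemize}
\item pad $H$ with isolated vertices to get $H'$;
\item compare Rayleigh quotients at a nonnegative unit eigenvector of $A(H')$;
\item in the equality case, use that the top eigenspace of the irreducible matrix $A(G)$ is spanned by a strictly positive vector, which forces $A(H')=A(G)$.
\end{itemize}
You use two small facts implicitly, and both follow from expanding in an orthonormal eigenbasis: $\rho(H)\ge 0$ (since the trace of $A(H)$ is zero), so padding does not change the spectral radius; and a unit vector attaining $\max_{\|x\|=1}x^{T}A(G)x$ lies in the top eigenspace.
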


\begin{lemma}[\cite{Wu2005}]\label{lem:perron_edge_shifting}
	Let $G$ be a connected graph, and let $u,v$ be two vertices of $G$. Suppose that there exists a set of vertices $\{v_1,v_2,\dots,v_s\} \subseteq N_G(v) \setminus N_G(u)$ with $s \ge 1$. Let $G^*$ be the graph obtained from $G$ by deleting the edges $vv_i$ and adding the edges $uv_i$ for all $1 \le i \le s$. Let $\mathbf{x}$ be the Perron vector of $A(G)$, and let $\mathbf{x}_w$ denote the coordinate of $\mathbf{x}$ corresponding to the vertex $w \in V(G)$. If $\mathbf{x}_u \ge \mathbf{x}_v$, then $\rho(G) < \rho(G^*)$.
\end{lemma}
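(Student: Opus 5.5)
The plan is to compare Rayleigh quotients using the Perron vector $\mathbf{x}$ of $A(G)$, and then to rule out equality by looking at the eigen-equation at the vertex $u$.

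Throughout we assume $u\neq v$ and $v_i\neq u$ for all $i$; the latter is implicit in the statement, since otherwise $G^*$ would have a loop. Since $G$ is connected, $\mathbf{x}$ is a positive unit vector with $A(G)\mathbf{x}=\rho(G)\mathbf{x}$.

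\textbf{Step 1: $\rho(G^*)\ge\rho(G)$.} By the Rayleigh principle, $\rho(G^*)\ge \mathbf{x}^{T}A(G^*)\mathbf{x}$. The matrices $A(G^*)$ and $A(G)$ differ only in the entries for the pairs $\{v,v_i\}$ and $\{u,v_i\}$. Hence
\[
\mathbf{x}^{T}A(G^*)\mathbf{x}-\mathbf{x}^{T}A(G)\mathbf{x}=2\sum_{i=1}^{s}\mathbf{x}_{v_i}(\mathbf{x}_u-\mathbf{x}_v)\ge 0 ,
\]
using $\mathbf{x}_u\ge\mathbf{x}_v$. Since $\mathbf{x}^{T}A(G)\mathbf{x}=\rho(G)$, we get $\rho(G^*)\ge\rho(G)$.

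\textbf{Step 2: strict inequality.} This is the only point needing care, because $G^*$ need not be connected and we cannot appeal to uniqueness of its Perron vector.

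Suppose $\rho(G^*)=\rho(G)$. Then the chain above is an equality, so $\mathbf{x}$ attains the maximum of the Rayleigh quotient of the real symmetric matrix $A(G^*)$. A maximizer of the Rayleigh quotient is an eigenvector for the largest eigenvalue, so $A(G^*)\mathbf{x}=\rho(G)\mathbf{x}$.

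Now compare the $u$-th coordinates of the two eigen-equations. In $G^*$ the neighbourhood of $u$ is $N_G(u)\cup\{v_1,\dots,v_s\}$, and this union is disjoint because $v_i\notin N_G(u)$. Therefore
\[
\rho(G)\mathbf{x}_u=\sum_{w\in N_G(u)}\mathbf{x}_w+\sum_{i=1}^{s}\mathbf{x}_{v_i}
=\rho(G)\mathbf{x}_u+\sum_{i=1}^{s}\mathbf{x}_{v_i}.
\]
Since $s\ge1$ and $\mathbf{x}$ is positive, the last sum is strictly positive. This is a contradiction, so $\rho(G^*)>\rho(G)$.
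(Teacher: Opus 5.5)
Your proof is correct: the Rayleigh-quotient computation gives $\rho(G^*)\ge\rho(G)$, and if equality held then $\mathbf{x}$ would be an eigenvector of $A(G^*)$, which the eigen-equation at $u$ rules out because $\sum_{i=1}^{s}\mathbf{x}_{v_i}>0$. The paper gives no proof of this lemma and only cites Wu, Xiao and Hong; your argument is essentially the standard proof of this result, and you rightly avoid assuming that $G^*$ is connected or that $\mathbf{x}$ is its Perron vector.
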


The following lemma gives a sharp upper bound on the spectral radius.

\begin{lemma}[\cite{Hong2001,Nikiforov2002}]\label{lem:spectral_upper_bound}
	Let $G$ be a graph of order $n$ and size $m$ with minimum degree $\delta \ge 1$. Then
	\[
	\rho(G) \le \frac{\delta-1}{2} + \sqrt{2m-\delta n+\frac{(\delta+1)^2}{4}},
	\]
	with equality if and only if $G$ is either a $\delta$-regular graph or a bidegreed graph in which each vertex has degree either $\delta$ or $n-1$.
\end{lemma}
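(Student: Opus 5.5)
This bound is quoted from \cite{Hong2001,Nikiforov2002}, and I would reprove it by the classical Perron-vector argument of Hong, Shu and Fang. The aim is to establish the equivalent quadratic inequality
\[
\rho^2-(\delta-1)\rho\le 2m-\delta n+\delta .
\]
Solving this for $\rho$ gives exactly $\rho\le \frac{\delta-1}{2}+\sqrt{2m-\delta n+\frac{(\delta+1)^2}{4}}$, since $\frac{(\delta+1)^2}{4}-\frac{(\delta-1)^2}{4}=\delta$.

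\textbf{Setup.} First reduce to $G$ connected, since $\rho(G)$ is attained on a component and the right-hand side can only grow when edges of other components are added. Let $\mathbf{x}$ be the Perron vector, scaled so that $\max_w \mathbf{x}_w=\mathbf{x}_u=1$. Write $\rho=\rho(G)$, $d_w=d_G(w)$, and $c(u,w)=|N(u)\cap N(w)|$.

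\textbf{Step 1: the row identity at $u$.} Evaluating $A^2\mathbf{x}=\rho^2\mathbf{x}$ and $A\mathbf{x}=\rho\mathbf{x}$ at $u$ gives
\[
\rho^2-(\delta-1)\rho = d_u+\sum_{w\ne u} c(u,w)\,\mathbf{x}_w-(\delta-1)\sum_{v\in N(u)}\mathbf{x}_v .
\]

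\textbf{Step 2: bound the right-hand side combinatorially.} Count walks $u\,v\,w$ by the middle vertex $v\in N(u)$ and separate the endpoints $w$ into three kinds. If $w=u$, the contribution is $d_u$. If $w\in N(u)$, the walk uses an edge inside $N(u)$. If $w\notin N[u]$, the walk uses an edge between $N(u)$ and $V\setminus N[u]$.

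Each edge inside $N(u)$ is traversed from both ends, contributing $\mathbf{x}_v+\mathbf{x}_w$. The penalty term $(\delta-1)\sum_{v\in N(u)}\mathbf{x}_v$ is to be charged against these and against the edges from $N(u)$ to the outside. The point is that every $v\in N(u)$ has at least $\delta-1$ neighbours other than $u$, so each $\mathbf{x}_v$ absorbs $\delta-1$ of these incidences. After this charging, and using $\mathbf{x}_w\le 1$ throughout, the right-hand side is at most
\[
d_u+2e(N(u))+e(N(u),V\setminus N[u]) .
\]
Now $2m$ also includes $d_u$ (the edges at $u$), twice the edges inside $V\setminus N[u]$, and the edges from $N(u)$ to $V\setminus N[u]$. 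Since each of the $n-1-d_u$ vertices outside $N[u]$ has degree at least $\delta$, the surplus can be compared with $\sum_{w\notin N[u]} d_w\ge \delta(n-1-d_u)$. Together with $d_u\ge\delta$, this yields the bound $2m-\delta(n-1)=2m-\delta n+\delta$.

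\textbf{Step 3: equality.} Retracing the inequalities, equality forces three things: $\mathbf{x}_w=1$ wherever $\mathbf{x}_w$ carries a positive coefficient; every vertex outside $N[u]$ has degree exactly $\delta$; and no slack occurs in the charging at vertices of $N(u)$. From this one shows that either all degrees equal $\delta$ (the regular case), or the vertices with $\mathbf{x}_w=1$ are exactly the vertices of degree $n-1$ and all others have degree $\delta$. Conversely, for a $\delta$-regular or $\{\delta,n-1\}$-bidegreed graph, a direct eigen-equation check shows equality.

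\textbf{Main obstacle.} The hard part is the bookkeeping in Step 2. The negative term $-(\delta-1)\sum_{v\in N(u)}\mathbf{x}_v$ must be distributed so that every coefficient multiplying some $\mathbf{x}_w$ ends up nonnegative before $\mathbf{x}_w\le 1$ is applied. Otherwise the inequality $\mathbf{x}_w\le1$ points the wrong way. I would first try to pair each $\mathbf{x}_v$, $v\in N(u)$, with $\delta-1$ of the edges at $v$ other than $uv$. If this does not close, I would fall back on Nikiforov's matrix argument, showing that
\[
A^2-(\delta-1)A-(2m-\delta n+\delta)I
\]
has all row sums nonpositive when weighted by $\mathbf{x}$.
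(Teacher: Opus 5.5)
The paper quotes this bound from Hong--Shu--Fang and Nikiforov without proof, so your argument has to stand on its own. Step~2 has a genuine gap.

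The quantity you display after the charging, $d_u+2e(N(u))+e(N(u),V\setminus N[u])$, equals $\sum_{v\in N(u)}d_v$. You get it just by dropping the penalty term and using $\mathbf{x}_w\le 1$, with no charging at all. The degree count then gives only $\sum_{v\in N(u)}d_v\le 2m-\delta(n-1)+(\delta-1)d_u$. The fact $d_u\ge\delta$ makes the extra term larger, not smaller: for a $\delta$-regular graph your displayed quantity is $\delta^2$, whereas $2m-\delta(n-1)=\delta$.

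So the penalty must genuinely be subtracted. With $B=A^2-(\delta-1)A$, you need $\sum_w B_{uw}\mathbf{x}_w\le\sum_w B_{uw}$. A maximal-entry argument cannot deliver this, because $B_{uw}=c(u,w)-(\delta-1)$ can be negative for $w\in N(u)$. The proposed pairing does not repair it. An edge $vz$ with $v\in N(u)$ and $z\notin N[u]$ contributes $\mathbf{x}_z$, not $\mathbf{x}_v$, to the walk count. It can absorb $-\mathbf{x}_v$ only if $\mathbf{x}_z\le\mathbf{x}_v$, which you do not know. Explicitly, after cancelling the edges inside $N(u)$, your row identity reads
\[
\rho^2-(\delta-1)\rho=d_u+\sum_{v\in N(u)}(d_v-\delta)\mathbf{x}_v+\sum_{vz}(\mathbf{x}_z-\mathbf{x}_v),
\]
where the last sum runs over edges $vz$ with $v\in N(u)$ and $z\notin N[u]$. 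The target is $2m-\delta(n-1)=d_u+\sum_{v\in N(u)}(d_v-\delta)+\sum_{z\notin N[u]}(d_z-\delta)$. Knowing only $0<\mathbf{x}_w\le 1$, the last sum can be arbitrarily close to $e(N(u),V\setminus N[u])$. In a triangle-free $\delta$-regular graph that is $\delta(\delta-1)$, while the target leaves no room at all.

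Your fallback, read literally, asks for $\sum_w (B-\beta I)_{uw}\mathbf{x}_w\le 0$ row by row, with $\beta=2m-\delta(n-1)$. That is just the conclusion restated.

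The missing idea is to sum over all rows instead of working at a maximal entry. $B$ is symmetric and $B\mathbf{x}=(\rho^2-(\delta-1)\rho)\mathbf{x}$, so multiplying by the all-ones vector gives
\[
\bigl(\rho^2-(\delta-1)\rho\bigr)\sum_{w}\mathbf{x}_w=\sum_{w}r_w\mathbf{x}_w,\qquad r_w=\sum_{v\in N(w)}d_v-(\delta-1)d_w .
\]
Here $r_w$ is the ordinary (unweighted) row sum of $B$ at $w$. Your own degree count, applied at $w$, gives $r_w=2m-\delta d_w-\sum_{z\notin N[w]}d_z\le 2m-\delta(n-1)$. Nonnegativity of $\mathbf{x}$ then finishes the inequality, with no sign condition on the entries of $B$. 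A nonnegative eigenvector suffices here, so you need no reduction to the connected case.

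For connected $G$, equality forces $r_w=2m-\delta(n-1)$ for every $w$. That means every vertex with a non-neighbour has degree $\delta$, which is exactly the $\{\delta,n-1\}$ condition. Conversely, that condition gives $B\mathbf{1}=(2m-\delta(n-1))\mathbf{1}$, and computing $\mathbf{1}^{T}B\mathbf{x}$ two ways forces equality. This replaces your Step~3.

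Note also that the equality clause needs connectivity, as Hong--Shu--Fang assume. The graph $(K_1\vee C_4)\cup K_4$ attains equality, with $n=9$, $\delta=3$ and $\rho=1+\sqrt5$, yet it has a vertex of degree $4\notin\{3,8\}$. So "reduce to $G$ connected" cannot be used for the characterization.
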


\begin{lemma}[\cite{Hong2001,Nikiforov2002}]\label{lem:function_decreasing}
	Let $p$ and $q$ be nonnegative integers with $2q \le p(p-1)$. Then the function
	\[
	f(x)=\frac{x-1}{2}+\sqrt{2q-px+\frac{(x+1)^2}{4}}
	\]
	is decreasing on $0 \le x \le p-1$.
\end{lemma}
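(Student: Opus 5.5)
The plan is to prove the monotonicity by direct differentiation and to reduce the sign of $f'$ to the hypothesis $2q\le p(p-1)$ by a single completing-the-square identity. Write
\[
D(x)=2q-px+\frac{(x+1)^2}{4},
\]
so that $f(x)=\frac{x-1}{2}+\sqrt{D(x)}$. The key step is the identity
\[
\Bigl(p-\frac{x+1}{2}\Bigr)^2-D(x)=p^2-p(x+1)+px-2q=p(p-1)-2q,
\]
which holds for every $x$. Under the hypothesis the right-hand side is nonnegative, so
\[
D(x)\le \Bigl(p-\frac{x+1}{2}\Bigr)^2 .
\]
On $0\le x\le p-1$ we have $p-\frac{x+1}{2}\ge \frac{p}{2}\ge 0$. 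Hence $\sqrt{D(x)}\le p-\frac{x+1}{2}$ wherever $D(x)\ge 0$.

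Next I would compute the derivative wherever $D(x)>0$:
\[
f'(x)=\frac12+\frac{D'(x)}{2\sqrt{D(x)}}=\frac12-\frac{p-\frac{x+1}{2}}{2\sqrt{D(x)}} .
\]
By the inequality above, the fraction $\bigl(p-\frac{x+1}{2}\bigr)/\sqrt{D(x)}$ is at least $1$, so $f'(x)\le 0$. Equality throughout occurs only when $2q=p(p-1)$. In that case $\sqrt{D(x)}=p-\frac{x+1}{2}$ identically, and $f$ is the constant $\frac{2p-3}{2}\cdot$ — more precisely $f(x)=\frac{x-1}{2}+p-\frac{x+1}{2}=p-1$. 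So "decreasing" is meant in the non-strict sense, and it is strict when $2q<p(p-1)$.

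The only real obstacle is the domain: the lemma implicitly assumes $D(x)\ge 0$ on the interval. In every application (Lemma \ref{lem:spectral_upper_bound}, with $q=m$ the number of edges and $p=n$), the square root is real, since it bounds a spectral radius. I would state that $f$ is considered on the subinterval where $D\ge 0$. I would also note that $D$ is a convex quadratic in $x$ whose minimum lies at $x=2p-1\ge p-1$. Hence $D$ is itself decreasing on $[0,p-1]$, so its nonnegativity set there is an initial segment $[0,x_0]$. Thus $f$ is defined and nonincreasing on that segment, and continuity of $f$ at the endpoint where $D=0$ (where $f'$ may blow up) does not affect monotonicity.
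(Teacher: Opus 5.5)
The paper gives no proof of this lemma; it is quoted from the cited references. So there is nothing internal to compare against, and your argument has to stand on its own. It does.

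\textbf{Correctness.} The identity $D(x)=\bigl(p-\frac{x+1}{2}\bigr)^2-\bigl(p(p-1)-2q\bigr)$ is right. So is $p-\frac{x+1}{2}\ge \frac p2\ge 0$ on $[0,p-1]$. Together they give $\sqrt{D(x)}\le p-\frac{x+1}{2}=-D'(x)$, hence $f'\le 0$ wherever $D>0$. The inequality is strict when $2q<p(p-1)$ and $p\ge 1$, and $f\equiv p-1$ when $2q=p(p-1)$.

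\textbf{The domain hypothesis.} Your proof exposes that the lemma silently needs $D\ge 0$. For example, $q=0$, $p=5$ gives $D(4)<0$, so the statement is not literally meaningful on all of $[0,p-1]$. Your observation that $D$ decreases on $[0,p-1]$ (its vertex is at $x=2p-1$) is the right fix.

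\textbf{Justification in the paper's use.} There $p=n$, $q=e(G)$, and $f$ is compared at $k+c\le\delta(G)$. The clean reason the root is real is not ``it bounds a spectral radius'' but the handshake inequality $2e(G)\ge\delta(G)n$. This gives $D(\delta(G))\ge(\delta(G)+1)^2/4>0$, and since $D$ is decreasing, $D>0$ on all of $[0,\delta(G)]$.

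\textbf{A calculus-free alternative.} You can drop the derivative and the endpoint discussion. Put $y=p-\frac{x+1}{2}\ge 0$ and $C=p(p-1)-2q\ge 0$. Then $f(x)=p-1-\frac{C}{y+\sqrt{y^2-C}}$, and since $y$ decreases in $x$, monotonicity is immediate wherever $y\ge\sqrt{C}$.

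\textbf{Typo.} Delete the stray fragment ``$\frac{2p-3}{2}\cdot$ --- more precisely''; the constant value is $p-1$.
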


We also need the following combinatorial upper bounds.

\begin{lemma}[\cite{Wei2022}]\label{lem:combinatorial_bound}
	Let $a_1,a_2,\dots,a_l$ be integers with $l \ge 2$ and $0 \le a_1 \le a_2 \le \cdots \le a_l$. Let $f(x_1,x_2,\dots,x_l)=\sum_{i=1}^l \binom{x_i}{2}$, where $\sum_{i=1}^l x_i=n$ and $x_i \ge a_i$ for all $1 \le i \le l$. Then
	\[
	f(x_1,x_2,\dots,x_l) \le \binom{n-\sum_{i=1}^{l-1} a_i}{2}+\sum_{i=1}^{l-1}\binom{a_i}{2}.
	\]
	Moreover, the equality holds if and only if $x_i=a_i$ for all $1 \le i \le l-1$ and $x_l=n-\sum_{i=1}^{l-1} a_i$.
\end{lemma}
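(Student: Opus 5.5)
The plan is to prove Lemma \ref{lem:combinatorial_bound} by a direct exchange argument based on the convexity of $x\mapsto\binom{x}{2}$. The key identity is that for integers $x\le y$,
\[
\binom{x-1}{2}+\binom{y+1}{2}-\binom{x}{2}-\binom{y}{2}=y-x+1>0 .
\]
So moving one unit from a smaller coordinate to a coordinate that is at least as large strictly increases $f$.

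First, the feasible set $\{(x_1,\dots,x_l)\in\mathbb{Z}^l : \sum x_i=n,\ x_i\ge a_i\}$ is finite. We may assume it is nonempty, i.e.\ $n\ge\sum a_i$; otherwise the statement is vacuous. Hence $f$ attains a maximum. Let $(x_1,\dots,x_l)$ be any maximizer. We claim $x_i=a_i$ for every $i\le l-1$. Suppose instead that $x_i>a_i$ for some $i\le l-1$, and pick an index $j\neq i$ with $x_j\ge x_i$.
\begin{itemize}
\item If $x_l\ge x_i$, take $j=l$.
\item Otherwise $x_l<x_i$. Move one unit from $x_l$ to $x_i$: this is feasible because $x_l>x_i-1\ge a_i\ge$, and more precisely we need $x_l-1\ge a_l$.
\end{itemize}
This second case needs care. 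The cleaner route is to first argue that in a maximizer every coordinate with slack is the largest one. Suppose $x_i>a_i$ and some $x_j\ge x_i$ with $j\ne i$. Then replacing $(x_i,x_j)$ by $(x_i-1,x_j+1)$ stays feasible and strictly increases $f$ by the identity. Hence every slack coordinate $x_i$ is strictly larger than all other coordinates. In particular, at most one coordinate has slack.

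Now let $i$ be the unique slack index, if there is one. All other coordinates equal their lower bounds, so $x_i=n-\sum_{j\ne i}a_j$. If $i\ne l$, compare with the vector that places the surplus on $l$ instead. Write $s=x_i-a_i>0$. The two values are
\[
\binom{a_i+s}{2}+\binom{a_l}{2}\qquad\text{and}\qquad \binom{a_i}{2}+\binom{a_l+s}{2}.
\]
Their difference equals $s(a_l-a_i)\ge0$, which follows from $\binom{a+s}{2}-\binom{a}{2}=sa+\binom{s}{2}$.

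If $a_l>a_i$, this contradicts maximality. If $a_l=a_i$, the two vectors have equal value. But then the vector with surplus on $l$ is also a maximizer, and there $x_l\ge x_i$ with $x_i>a_i$... Here the relevant coordinate is $x_l=a_l+s>a_i$, so the original maximizer had $x_i=a_i+s>a_l=x_l$, consistent. So equality in the lemma's ``only if'' would fail when $a_i=a_l$.

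I would therefore read the uniqueness clause as holding up to this symmetry (or under $a_{l-1}<a_l$). Note that in that tie case the bound value is the same, so the inequality itself is unaffected. If there is no slack index at all, then $n=\sum a_i$ and the claimed configuration holds trivially.

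In summary, the maximum value of $f$ is $\binom{n-\sum_{i<l}a_i}{2}+\sum_{i<l}\binom{a_i}{2}$, which proves the inequality. Equality holds for the stated configuration. The main obstacle I expect is exactly the equality characterization when several $a_i$ coincide with $a_l$; I would handle it by noting that the exchange argument gives uniqueness up to permuting indices with $a_i=a_l$, and otherwise gives strict uniqueness.
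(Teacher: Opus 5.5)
The paper does not prove this lemma; it is quoted from \cite{Wei2022}, so there is no internal proof to compare against. Your final argument (the ``cleaner route'') is a correct, self-contained proof of the inequality. The identity $\binom{x-1}{2}+\binom{y+1}{2}-\binom{x}{2}-\binom{y}{2}=y-x+1$ forces every slack coordinate of a maximizer to be strictly larger than all other coordinates, so at most one coordinate has slack. Moving that surplus $s$ from index $i$ to index $l$ then changes $f$ by $s(a_l-a_i)\ge 0$, so the maximum equals the stated bound. In a write-up, delete the abandoned first attempt (the unfinished sentence ending in ``$a_i\ge$''). Also state at the outset that the $x_i$ are integers and $n\ge\sum_i a_i$.

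Your doubt about the equality clause is justified: the ``only if'' direction is false as literally stated whenever $a_{l-1}=a_l$ and $n>\sum_i a_i$. For example, take $l=2$, $a_1=a_2=1$ and $n=4$. The bound is $\binom{3}{2}=3$, and $f(3,1)=3$ attains it although $x_1\neq a_1$. Your argument proves the correct version: equality holds if and only if there is an index $i$ with $a_i=a_l$ such that $x_j=a_j$ for all $j\neq i$. The case $n=\sum_i a_i$ is included trivially. This coincides with the stated form exactly when $a_{l-1}<a_l$ or $n=\sum_i a_i$. The corrected form is also what the paper's own application in Lemma \ref{lem:sum_bound} really uses. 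There all lower bounds equal $1$, and the extremal configuration carries the surplus on $x_2$ rather than on the last variable.
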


\begin{lemma}\label{lem:sum_bound}
	Let $p,n,l$ be integers with $n\ge 2p$ and $3\le l\le p+1$. Let $x_1,x_2,\dots,x_l$ be positive integers such that $x_1\ge x_2\ge \cdots\ge x_l$, $\sum_{i=1}^l x_i=n$, and $\sum_{i=2}^l x_i\ge p$. Then
	\[
	\sum_{i=1}^l \binom{x_i}{2}\le \binom{n-p}{2}+\binom{p-l+2}{2}.
	\]
	Moreover, equality holds if and only if $x_1=n-p$, $x_2=p-l+2$ and $x_3=\cdots=x_l=1$.
\end{lemma}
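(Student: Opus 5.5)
We prove Lemma \ref{lem:sum_bound} by a convexity (shifting) argument on the function $\binom{x}{2}$, which is convex in $x$. The idea is to transfer mass from the smaller parts $x_2,\dots,x_l$ to the largest part $x_1$ until the constraint $\sum_{i=2}^l x_i\ge p$ becomes tight, and then to concentrate all remaining mass among $x_2,\dots,x_l$ on $x_2$.

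First we reduce to the case $\sum_{i=2}^l x_i=p$. Suppose $s=\sum_{i=2}^l x_i>p$. Since $l-1\le p<s$, some $x_j$ with $j\ge 2$ satisfies $x_j\ge 2$; take the largest such $j$. Replace $x_j$ by $x_j-1$ and $x_1$ by $x_1+1$. Because $x_1\ge x_j$, the sum changes by $x_1-(x_j-1)\ge 1>0$, so $\sum_i\binom{x_i}{2}$ strictly increases. The ordering $x_1\ge\cdots\ge x_l$ can be restored by reindexing the tail, and positivity is preserved. Iterating, we reach $s=p$ and strictly increase the objective at every step.

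Now assume $s=p$, so $x_1=n-p$, and the tail $x_2,\dots,x_l$ consists of $l-1$ positive integers summing to $p$. By the standard argument (equivalently, Lemma \ref{lem:combinatorial_bound} with all lower bounds equal to $1$), $\sum_{i=2}^l\binom{x_i}{2}\le\binom{p-l+2}{2}$. Equality holds only when $x_2=p-l+2$ and $x_3=\cdots=x_l=1$. Since $l\le p+1$, we have $p-l+2\ge 1$, so this configuration is admissible. We also need it to satisfy the ordering $x_1\ge x_2$, that is, $n-p\ge p-l+2$; this holds because $n\ge 2p$ and $l\ge 3$. Combining the two stages gives
\[
\sum_{i=1}^l\binom{x_i}{2}\le\binom{n-p}{2}+\binom{p-l+2}{2}.
\]

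For the equality characterization, note that any configuration with $s>p$ is strictly beaten by some configuration with $s=p$. Within the case $s=p$, equality forces exactly the stated tail. Hence equality holds if and only if $x_1=n-p$, $x_2=p-l+2$ and $x_3=\cdots=x_l=1$.

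The only delicate point is ensuring that each shifting step strictly increases the objective, which relies on $x_1\ge x_j$. We guarantee this by always keeping $x_1$ the largest part; since $x_1$ only grows during the shifts, it remains the largest. Beyond this, the verification is routine.
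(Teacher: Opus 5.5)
Your proposal is correct and follows essentially the same route as the paper. You move one unit from a tail entry $x_j\ge 2$ to $x_1$ (gaining $x_1-x_j+1\ge 1$) until $\sum_{i=2}^l x_i=p$, then apply Lemma \ref{lem:combinatorial_bound} to the tail with all lower bounds equal to $1$, and check feasibility of the extremal configuration using $n\ge 2p$ and $l\le p+1$. The only difference is cosmetic: you iterate the shift explicitly, whereas the paper fixes a maximizing sequence and derives a contradiction from a single shift.
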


\begin{proof}
	Choose a feasible sequence $x_1,x_2,\ldots,x_l$ for which $\sum_{i=1}^l\binom{x_i}{2}$ is maximum, and let $r=\sum_{i=2}^l x_i$. We first claim that $r=p$. Suppose to the contrary that $r\ge p+1$. Since $l\le p+1$, we have $l-1\le p$. By the Pigeonhole Principle, there exists some $x_j$ with $2\le j\le l$ such that $x_j\ge 2$.
	
	Let $x_1'=x_1+1$, $x_j'=x_j-1$, and let all other terms remain unchanged. After rearranging $x_2',\ldots,x_l'$ in non-increasing order if necessary, we obtain a new sequence $x_1'\ge x_2'\ge \cdots\ge x_l'$. Clearly, $\sum_{i=1}^l x_i'=n$ and $\sum_{i=2}^l x_i'=r-1\ge p$. Hence this new sequence is still feasible. Moreover,
	\begin{align*}
		\sum_{i=1}^l \binom{x_i'}{2}-\sum_{i=1}^l \binom{x_i}{2}
		&=\binom{x_1+1}{2}-\binom{x_1}{2}
		+\binom{x_j-1}{2}-\binom{x_j}{2}\\
		&=x_1-x_j+1\ge 1,
	\end{align*}
	which contradicts the maximality of $x_1,x_2,\ldots,x_l$. Therefore, $r=p$, and hence $x_1=n-p$.
	
	Now $\sum_{i=2}^{l} x_i=p$ and $x_2\ge x_3\ge\cdots\ge x_l\ge 1$. By Lemma \ref{lem:combinatorial_bound}, we have
	\[
	\sum_{i=2}^l\binom{x_i}{2}\le \binom{p-(l-2)}{2}=\binom{p-l+2}{2}.
	\]
	Hence,
	\[
	\sum_{i=1}^l\binom{x_i}{2}
	\le \binom{n-p}{2}+\binom{p-l+2}{2}.
	\]
	
	Equality holds only if $r=p$ and equality holds in Lemma \ref{lem:combinatorial_bound} for $x_2,\ldots,x_l$. Hence equality holds only if $x_1=n-p$, $x_2=p-l+2$ and $x_3=\cdots=x_l=1$. Conversely, since $l\le p+1$, we have $p-l+2\ge 1$. Since $n\ge 2p$ and $l\ge 3$, we have $n-p\ge p\ge p-l+2$. Hence the sequence $x_1=n-p$, $x_2=p-l+2$ and $x_3=\cdots=x_l=1$ is feasible and attains equality. This completes the proof.
\end{proof}

We also need the following theorem, which will be applied to closed convex quadrilaterals in our proofs below.

\begin{theorem}[\cite{Niculescu2025}]\label{thm:convex_max_vertex}
	If $f$ is a continuous convex function on a compact convex subset $K$ of $\mathbb{R}^N$, then $f$ attains a global maximum at an extreme point.
\end{theorem}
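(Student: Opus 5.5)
Since $f$ is continuous and $K$ is compact, $f$ attains its global maximum at some point $x^*\in K$. The plan is to write $x^*$ as a convex combination of finitely many extreme points of $K$ and then use convexity to push the maximum onto one of them. The only nontrivial ingredient is Minkowski's theorem: in $\mathbb{R}^N$, every compact convex set is the convex hull of its extreme points. No closure is needed, unlike in the infinite-dimensional Krein--Milman theorem.

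\emph{Step 1 (Minkowski's theorem).} We show, by induction on the dimension $d$ of the affine hull of $K$, that every $x\in K$ is a finite convex combination of extreme points of $K$. For $d=0$ the set $K$ is a single point, which is extreme. For $d\ge 1$, take $x\in K$.
\begin{itemize}
\item If $x$ lies on the relative boundary of $K$, the supporting hyperplane theorem (applied inside the affine hull) gives a hyperplane $H$ through $x$ with $K$ on one side. The face $F=K\cap H$ is compact, convex and of smaller dimension. By induction $x$ is a convex combination of extreme points of $F$. Extreme points of a face are extreme points of $K$: if $e\in F$ and $e=\tfrac12(y+z)$ with $y,z\in K$, then $y,z\in H$ because $K$ lies on one side of $H$, so $y,z\in F$.
\item If $x$ is a relative interior point, take any line through $x$ in the affine hull. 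Its intersection with $K$ is a compact segment $[a,b]$ with $a,b$ on the relative boundary, and $x\in[a,b]$. Apply the boundary case to $a$ and $b$, and combine the two representations.
\end{itemize}

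\emph{Step 2 (convexity).} By Step 1, write $x^*=\sum_{i=1}^m \lambda_i e_i$ with each $e_i$ an extreme point of $K$, $\lambda_i\ge 0$ and $\sum_i\lambda_i=1$. Jensen's inequality for convex $f$ gives
\[
f(x^*)\le \sum_{i=1}^m \lambda_i f(e_i)\le \max_{1\le i\le m} f(e_i).
\]
Since $f(x^*)$ is the global maximum, equality holds, so some $e_i$ attains the maximum.

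The main obstacle is Step 1, specifically the careful use of supporting hyperplanes relative to the affine hull and the check that extreme points of a face are extreme in $K$. Once Minkowski's theorem is available, the rest is immediate.

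In the applications below, $K$ is a closed convex quadrilateral. There the extreme points are just the four corners and Step 1 is elementary (triangulate along a diagonal), so the maximum of $f$ is attained at one of the four vertices.
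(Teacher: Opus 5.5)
Your proof is correct. The paper gives no proof of this statement: it quotes it from Niculescu and uses it only to bound the convex quadratics $f(r,t)$ and $g(t_1,t_2)$ over the quadrilaterals $Q_1,Q_2,Q_3$. So there is no internal argument to compare against.

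What you give is the standard textbook proof of this finite-dimensional maximum principle: Minkowski's theorem that a compact convex set in $\mathbb{R}^N$ is the convex hull of its extreme points, followed by the finite Jensen inequality at a maximizer. You handle the delicate points correctly:
\begin{itemize}
\item taking the supporting hyperplane relative to the affine hull is what guarantees that $K\cap H$ has strictly smaller dimension;
\item the one-sidedness of $H$ gives that extreme points of the face are extreme in $K$;
\item for a relative interior point, the chord endpoints $a,b$ lie on the relative boundary.
\end{itemize}
The one thing to state explicitly is that the induction must be strong induction, since $K\cap H$ may drop by more than one dimension.

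Citing the result saves space, while your version makes the paper self-contained. As your closing remark notes, the paper's actual use needs only the polygon case: triangulate the quadrilateral and apply Jensen with barycentric coordinates. That bypasses the general Minkowski step entirely for the applications.
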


\section{Proofs of Theorems \ref{thm:k+c} and \ref{thm:k-1}}

We now present the proof of Theorem \ref{thm:k+c}.

\begin{proof}[Proof of Theorem \ref{thm:k+c}]
	Since $1\le c\le k-2$, we have $k\ge c+2$. Let $G$ be a graph with maximum spectral radius among all $(k+c)$-edge-connected graphs of order $n\ge (c+2)k-c^2+c+2$ with at most $k-1$ edge-disjoint spanning trees. It suffices to show that $G\in \mathcal{H}_{n,k}^{c}$ and $\rho(G)=\rho(H_{n,k}^{c,*})$.
	
	Recall that every graph in $\mathcal{H}_{n,k}^{c}$ has a partition $V(H)=U\cup T$ with $|T|=2c+2$ and $e_H(T,U)=k(2c+2)-1-e_H(T)$. Since the partition $\{U\}\cup\{\{v\}:v\in T\}$ satisfies $e_H(T,U)+e_H(T)=k(2c+2)-1<k(2c+3-1)$, every graph in $\mathcal{H}_{n,k}^{c}$ has at most $k-1$ edge-disjoint spanning trees by Theorem \ref{thm:nash_williams}. Therefore, $H_{n,k}^{c,*}$ belongs to the same graph class from which $G$ was chosen, and hence $\rho(G)\ge \rho(H_{n,k}^{c,*})$.
	
	Since $G$ has at most $k-1$ edge-disjoint spanning trees, Theorem \ref{thm:nash_williams} implies that there exists a partition $\pi$ of $V(G)$ into $t=t_1+t_2$ parts, where $t_1$ and $t_2$ denote the numbers of trivial and nontrivial parts, respectively, such that
	\begin{equation}\label{eq:partition_edges}
		e_G(\pi)\le k(t-1)-1=kt-k-1.
	\end{equation}
	
	Let the trivial and nontrivial parts of $\pi$ be $\{v_1\}, \{v_2\}, \ldots, \{v_{t_1}\}$ and $U_1, U_2, \ldots, U_{t_2}$, respectively. Since $G$ is $(k+c)$-edge-connected, we have $e_G(v_i,V(G)\setminus \{v_i\})\ge k+c$ for each $i \in \{1, 2, \ldots, t_1\}$, and $e_G(U_j,V(G)\setminus U_j)\ge k+c$ for each $j \in \{1, 2, \ldots, t_2\}$. By the handshaking lemma, 
	\[
	2e_G(\pi)=\sum_{i=1}^{t_1}e_G(v_i,V(G)\setminus \{v_i\})+\sum_{j=1}^{t_2}e_G(U_j,V(G)\setminus U_j)\ge (k+c)t_1+(k+c)t_2=(k+c)t.
	\]
	Thus, $(k+c)t\le 2e_G(\pi)\le 2(kt-k-1)$, which implies $t\ge \frac{2k+2}{k-c}=2+\frac{2c+2}{k-c}$. Since $1\le c\le k-2$ and $t$ is an integer, we have $t\ge 3$.
	
	Since $K_{n-2c-2}$ is a proper subgraph of $H_{n,k}^{c,*}$, Lemma \ref{lem:spectral_subgraph} implies that $\rho(G)\ge \rho(H_{n,k}^{c,*})>\rho(K_{n-2c-2})=n-2c-3$. Moreover, since $\delta(G)\ge \kappa'(G)\ge k+c$, Lemmas \ref{lem:spectral_upper_bound} and \ref{lem:function_decreasing} imply that
	\begin{align*}
		n-2c-3<\rho(G)
		&\le \frac{\delta(G)-1}{2}+\sqrt{2e(G)-\delta(G)n+\frac{(\delta(G)+1)^2}{4}} \\
		&\le \frac{k+c-1}{2}+\sqrt{2e(G)-(k+c)n+\frac{(k+c+1)^2}{4}}.
	\end{align*}
	Hence we have
	\begin{equation}\label{eq:edges_lower_bound}
		e(G)>\frac{1}{2}n^2-\frac{4c+5}{2}n+(c+1)k+3c^2+6c+3.
	\end{equation}
	
	\noindent \textbf{Claim 1.} \textit{$t_2\ge 1$.} 
	
	\noindent \textit{Proof.} Suppose to the contrary that $t_2=0$. Then all parts of $\pi$ are trivial, so $t_1=t=n$ and $e(G)=e_G(\pi)$. Set
	\begin{align*}
		\phi(n)
		&=\frac{1}{2}n^2-\frac{4c+5}{2}n+(c+1)k+3c^2+6c+3-(kn-k-1)\\
		&=\frac{1}{2}n^2-\frac{4c+2k+5}{2}n+(c+2)k+3c^2+6c+4.
	\end{align*}
	Then $\phi(n)$ is a quadratic function in $n$ which opens upward, and its axis of symmetry is $n^*=\frac{4c+2k+5}{2}$. Since $1\le c\le k-2$, we have
	\begin{align*}
		(c+2)k-c^2+c+2-\frac{4c+2k+5}{2}
		&=\frac{(2c+2)k-2c^2-2c-1}{2}\\
		&\ge \frac{(2c+2)(c+2)-2c^2-2c-1}{2}\\
		&=\frac{4c+3}{2}>0.
	\end{align*}
	Hence $\phi(n)$ is increasing for $n\ge (c+2)k-c^2+c+2$. Thus,
	\[
	\phi(n)\ge \phi\left((c+2)k-c^2+c+2\right)=\frac{1}{2}\left(c(k-c-1)-1\right)\left(c(k-c-1)+2k-2\right)\ge 0.
	\]
	
	By Inequality \eqref{eq:partition_edges}, we have
	\[
	e(G)\le kn-k-1 \le \frac{1}{2}n^2-\frac{4c+5}{2}n+(c+1)k+3c^2+6c+3,
	\]
	which contradicts Inequality \eqref{eq:edges_lower_bound}. Thus, $t_2\ge 1$ holds. \qed
	
	\noindent \textbf{Claim 2.} \textit{$t\ge 2c+3$.}
	
	\noindent \textit{Proof.} Suppose to the contrary that $3\le t\le 2c+2$. Without loss of generality, assume that the parts of $\pi$ are ordered as $|X_1|\ge |X_2|\ge \cdots \ge |X_t|$. Let $r=\sum_{i=2}^t |X_i|$. We claim that $r\ge 2c+3$. Otherwise, $t-1\le r\le 2c+2$.
	
	Since $\delta(G)\ge k+c$, by Lemma \ref{lem:combinatorial_bound}, we have
	\begin{align*}
		r(k+c) \le \sum_{i=2}^{t}\sum_{v\in X_i}d_G(v)
		&=2\sum_{i=2}^t e_G(X_i)+2\sum_{2\le i<j\le t} e_G(X_i, X_j)
		+e_G(X_1,V(G)\setminus X_1) \\
		&=\sum_{i=2}^t e_G(X_i)
		+\left(\sum_{i=2}^t e_G(X_i)+\sum_{2\le i<j\le t} e_G(X_i, X_j)\right)\\
		&\quad+\left(\sum_{2\le i<j\le t} e_G(X_i, X_j)+e_G(X_1,V(G)\setminus X_1)\right) \\
		&=\sum_{i=2}^t e_G(X_i)
		+e_G\left(\bigcup_{i=2}^t X_i\right)
		+e_G(\pi) \\
		&\le \binom{r-(t-2)}{2}+\binom{r}{2}+kt-k-1.
	\end{align*}
	Rearranging the above inequality, we obtain
	\begin{equation}\label{eq:f(rt)}
		0\le r^2-(k+c+t-1)r+\frac{1}{2}t^2+\frac{2k-3}{2}t-k:=f(r,t).
	\end{equation}
	The Hessian matrix $H_f$ of $f(r,t)$ is
	\[
	H_f=
	\begin{pmatrix}
		\frac{\partial^2 f}{\partial r^2}
		&
		\frac{\partial^2 f}{\partial r\partial t}
		\\[4pt]
		\frac{\partial^2 f}{\partial t\partial r}
		&
		\frac{\partial^2 f}{\partial t^2}
	\end{pmatrix}
	=
	\begin{pmatrix}
		2&-1\\
		-1&1
	\end{pmatrix}.
	\]
	Since $2>0$ and $\det(H_f)=1>0$, the matrix $H_f$ is positive definite. Hence the function $f(r,t)$ is convex on $\mathbb{R}^2$. By the previous assumption and discussion, the variables $r$ and $t$ satisfy $r\le 2c+2$, $r\ge t-1$ and $3\le t\le 2c+2$. These inequalities define a closed convex quadrilateral $Q_1$ with vertices $(2,3)$, $(2c+2,3)$, $(2c+2,2c+2)$ and $(2c+1,2c+2)$. By Theorem \ref{thm:convex_max_vertex} and $1\le c\le k-2$, we have
	\begin{align*}
		f(r,t)\le \max_{(r,t)\in Q_1}f(r,t)
		&= \max\{f(2,3),f(2c+2,3),f(2c+2,2c+2),f(2c+1,2c+2)\}\\
		&=\max\{-2c,2c(c+1-k),c+1-k,-1\}<0,
	\end{align*}
	which contradicts Inequality \eqref{eq:f(rt)}. Therefore, the claim $r\ge 2c+3$ holds.
	
	Since $n\ge (c+2)k-c^2+c+2\ge (c+2)^2-c^2+c+2=5c+6>2(2c+3)$
	and $3\le t\le 2c+2$, by Lemma \ref{lem:sum_bound}, we have
	\[
	\sum_{i=1}^t e_G(X_i)\le \sum_{i=1}^{t} \binom{|X_i|}{2}\le  \binom{n-(2c+3)}{2}+\binom{2c+3-t+2}{2}.
	\]
	Combining this with Inequality \eqref{eq:partition_edges}, we obtain
	\begin{align*}
		e(G)
		&=\sum_{i=1}^t e_G(X_i)+e_G(\pi)\\
		&\le \binom{n-2c-3}{2}+\binom{2c-t+5}{2}+kt-k-1\\
		&=\frac{1}{2}t^2+\left(k-2c-\frac{9}{2}\right)t
		+\frac{1}{2}n^2-\frac{4c+7}{2}n
		+4c^2+16c-k+15\\
		&:=\psi(t).
	\end{align*}
	
	Since $\psi(t)$ is a quadratic function in $t$ which opens upward and $3\le t\le 2c+2$, we have
	\begin{align*}
		e(G) \le \psi(t)
		&\le \max\{\psi(3),\psi(2c+2)\}\\
		&=\max\bigg\{
		\frac{1}{2}n^2-\frac{4c+7}{2}n+4c^2+10c+2k+6,\\
		&\qquad
		\frac{1}{2}n^2-\frac{4c+7}{2}n+2c^2+7c+2ck+k+8
		\bigg\}\\
		&=\frac{1}{2}n^2-\frac{4c+7}{2}n+2c^2+7c+2ck+k+8 \\
		&< \frac{1}{2}n^2-\frac{4c+5}{2}n+(c+1)k+3c^2+6c+3,
	\end{align*}
	which contradicts Inequality \eqref{eq:edges_lower_bound}. Thus, $t\ge 2c+3$ holds. \qed

	\noindent \textbf{Claim 3.} \textit{$t_1\ge 2c+2$.}
	
	\noindent \textit{Proof.} Suppose to the contrary that $0\le t_1\le 2c+1$. By Lemma \ref{lem:combinatorial_bound}, we have $e(G)\le g(t_1,t_2)$, where
	\begin{align*}
		e(G)&\le\binom{n-t_1-2(t_2-1)}{2}+(t_2-1)\binom{2}{2}+k(t_1+t_2)-(k+1)\\
		&=\frac{1}{2}(n-t_1-2t_2+2)(n-t_1-2t_2+1)+kt_1+(k+1)t_2-k-2\\
		&=2t_2^2-(2n-2t_1-k+2)t_2+\frac{1}{2}(n-t_1+2)(n-t_1+1)+k(t_1-1)-2\\
		&=:g(t_1,t_2).
	\end{align*}
	
	The Hessian matrix $H_g$ of $g(t_1,t_2)$ is
	\[
	H_g=
	\begin{pmatrix}
		\frac{\partial^2 g}{\partial t_1^2}&\frac{\partial^2 g}{\partial t_1\partial t_2}\\
		\frac{\partial^2 g}{\partial t_2\partial t_1}&\frac{\partial^2 g}{\partial t_2^2}
	\end{pmatrix}
	=
	\begin{pmatrix}
		1&2\\
		2&4
	\end{pmatrix}.
	\]
	Since $1>0$ and $\det(H_g)=0$, the matrix $H_g$ is positive semi-definite. Hence $g(t_1,t_2)$ is convex on $\mathbb{R}^2$. Since each nontrivial part contains at least $2$ vertices, we have $n\ge t_1+2t_2$. By Claim 2, we have $t_1+t_2\ge 2c+3$. Together with $0\le t_1\le 2c+1$, these inequalities define a closed convex quadrilateral $Q_2$ with vertices $(0,2c+3)$, $(0,\frac{n}{2})$, $(2c+1,\frac{n-2c-1}{2})$ and $(2c+1,2)$.
	
	By Theorem \ref{thm:convex_max_vertex} and $n\ge (c+2)k-c^2+c+2$, we have
	\begin{align*}
		e(G)
		&\le \max_{(t_1,t_2)\in Q_2} g(t_1,t_2)\\
		&= \max\left\{
		g(0,2c+3),\,
		g\left(0,\frac{n}{2}\right),\,
		g\left(2c+1,\frac{n-2c-1}{2}\right),\,
		g(2c+1,2)
		\right\}\\
		&=\max\bigg\{
		\frac{1}{2}n^2-\frac{8c+9}{2}n+2ck+2k+8c^2+20c+11,\,
		\frac{1}{2}(k+1)n-k-1,\\
		&\qquad	\frac{1}{2}(k+1)n+ck-c-\frac{k}{2}-\frac{3}{2},\,
		\frac{1}{2}n^2-\frac{4c+7}{2}n+2ck+2k+2c^2+7c+6
		\bigg\}\\
		&=\frac{1}{2}n^2-\frac{4c+7}{2}n+2ck+2k+2c^2+7c+6\\
		&< \frac{1}{2}n^2-\frac{4c+5}{2}n+(c+1)k+3c^2+6c+3,
	\end{align*}
	which contradicts Inequality \eqref{eq:edges_lower_bound}. Thus, $t_1\ge 2c+2$ holds. \qed
	
	\noindent \textbf{Claim 4.} \textit{$t=2c+3$.}
	
	\noindent \textit{Proof.} Suppose to the contrary that $t\ge 2c+4$. Then we still have $e(G)\le g(t_1,t_2)$. By the previous analysis and assumptions, we have $t_2\ge 1$, $t_1\ge 2c+2$, $t_1+2t_2\le n$ and $t_1+t_2\ge 2c+4$. These inequalities define a closed convex quadrilateral $Q_3$ with vertices $(2c+2,2)$, $(2c+2,\frac{n-2c-2}{2})$, $(n-2,1)$ and $(2c+3,1)$.
	
	By Theorem \ref{thm:convex_max_vertex} and $n\ge (c+2)k-c^2+c+2$, we have
	\begin{align*}
		e(G)
		&\le \max_{(t_1,t_2)\in Q_3} g(t_1,t_2)\\
		&= \max\left\{
		g(2c+2,2),\,
		g\left(2c+2,\frac{n-2c-2}{2}\right),\,
		g(n-2,1),\,
		g(2c+3,1)
		\right\}\\
		&=\max\bigg\{
		\frac{1}{2}n^2-\frac{4c+9}{2}n+2ck+3k+2c^2+9c+10,\,
		\frac{1}{2}(k+1)n+ck-c-2,\\
		&\qquad
		k(n-2),\,
		\frac{1}{2}n^2-\frac{4c+7}{2}n+2ck+3k+2c^2+7c+5
		\bigg\}\\
		&=\frac{1}{2}n^2-\frac{4c+7}{2}n+2ck+3k+2c^2+7c+5\\
		&\le \frac{1}{2}n^2-\frac{4c+5}{2}n+(c+1)k+3c^2+6c+3,
	\end{align*}
	which contradicts Inequality \eqref{eq:edges_lower_bound}. Thus, $t=2c+3$ holds. \qed 
	
	Since $t_1\ge 2c+2$ and $t_2\ge 1$, we must have $t_1=2c+2$ and $t_2=1$. Then the unique nontrivial part is $U_1$. Let $T_1=\{v_1,v_2,\dots,v_{2c+2}\}$ be the union of trivial parts.
	
	We claim that $G[U_1]\cong K_{n-2c-2}$. Otherwise, there exists a missing edge inside $U_1$. Adding this edge does not change $e_G(\pi)$, so the same partition $\pi$ still satisfies Inequality \eqref{eq:partition_edges}. Hence, by Theorem \ref{thm:nash_williams}, the resulting graph still has at most $k-1$ edge-disjoint spanning trees. Moreover, adding an edge cannot decrease the edge connectivity. Therefore, the resulting graph remains in the same graph class, while its spectral radius is strictly larger by Lemma \ref{lem:spectral_subgraph}, contradicting the maximality of $\rho(G)$. Hence $G[U_1]\cong K_{n-2c-2}$.
	
	We claim that $e_G(\pi)=(2c+2)k-1$. Suppose that $e_G(\pi)\le (2c+2)k-2$. Then there exists a missing edge whose ends lie in distinct parts of $\pi$. Adding this edge increases $e_G(\pi)$ by one, and hence the same partition $\pi$ still satisfies $e_G(\pi)\le (2c+2)k-1<k(2c+2)$. By Theorem \ref{thm:nash_williams}, the resulting graph still has at most $k-1$ edge-disjoint spanning trees. Moreover, adding an edge cannot decrease the edge connectivity. Therefore, the resulting graph remains in the same graph class, while its spectral radius is strictly larger by Lemma \ref{lem:spectral_subgraph}, contradicting the maximality of $\rho(G)$. Thus $e_G(\pi)=(2c+2)k-1$. Since $e_G(\pi)=e_G(T_1,U_1)+e_G(T_1)$, we have $e_G(T_1,U_1)+e_G(T_1)=(2c+2)k-1$.
	
	Since $\delta(G)\ge k+c$, for each $i\in\{1,2,\cdots,2c+2\}$ we have $d_G(v_i)\ge k+c$. Summing over all vertices in $T_1$, we obtain
	\[
	\sum_{i=1}^{2c+2} d_G(v_i)=2e_G(T_1)+e_G(T_1,U_1)\ge (2c+2)(k+c).
	\]
    Hence $e_G(T_1)\ge (2c+2)(k+c)-\left((2c+2)k-1 \right)= 2c^2+2c+1$. Since $e_G(T_1)\le \binom{2c+2}{2}=2c^2+3c+1$, $e_G(T_1)$ is constrained to 
	\[
	2c^2+2c+1\le e_G(T_1)\le 2c^2+3c+1.
	\]
	Hence we obtain $G\in \mathcal{H}_{n,k}^{c}$.
	
	By the maximality of $\rho(G)$, we have $\rho(G)\ge \rho(H_{n,k}^{c,*})$. On the other hand, since $G\in \mathcal{H}_{n,k}^{c}$ and $H_{n,k}^{c,*}$ has maximum spectral radius in $\mathcal{H}_{n,k}^{c}$, we have $\rho(G)\le \rho(H_{n,k}^{c,*})$. Hence $\rho(G)=\rho(H_{n,k}^{c,*})$. Therefore, $G$ is a graph in $\mathcal{H}_{n,k}^{c}$ with maximum spectral radius. This completes the proof.
	
\end{proof}

We next present the proof of Theorem \ref{thm:k-1}.

\begin{proof}[Proof of Theorem \ref{thm:k-1}]
	Let $G$ be a graph with maximum spectral radius among all $(2k-1)$-edge-connected graphs of order $n\ge 4k+2$ with at most $k-1$ edge-disjoint spanning trees. It suffices to show that $G\in \mathcal{H}_{n,k}$ and $\rho(G)=\rho(H^*_{n,k})$.
	
	Recal that every graph in $\mathcal{H}_{n,k}$ has a partition $V(H)=U\cup T$ with $|T|=2k+1$, $e_H(T)=k(2k-1)$ and $e_H(T,U)=2k-1$. Since the partition $\{U\}\cup\{\{v\}:v\in T\}$ satisfies $e_H(T,U)+e_H(T)=k(2k+1)-1<k(2k+2-1)$, every graph in $\mathcal{H}_{n,k}$ has at most $k-1$ edge-disjoint spanning trees by Theorem \ref{thm:nash_williams}. Therefore, $H^*_{n,k}$ belongs to the same graph class from which $G$ was chosen, and hence $\rho(G)\ge \rho(H_{n,k}^*)$
	
	Since $G$ has at most $k-1$ edge-disjoint spanning trees, Theorem \ref{thm:nash_williams} implies that there exists a partition $\pi$ of $V(G)$ into $t=t_1+t_2$ parts, where $t_1$ and $t_2$ denote the numbers of trivial and nontrivial parts, respectively, such that
	\begin{equation}\label{eq:partition_edges_endpoint}
		e_G(\pi)\le k(t-1)-1=kt-k-1.
	\end{equation}
	
	Let the trivial and nontrivial parts of $\pi$ be $\{v_1\}, \{v_2\}, \ldots, \{v_{t_1}\}$ and $U_1, U_2, \ldots, U_{t_2}$, respectively. Since $G$ is $(2k-1)$-edge-connected, we have $e_G(v_i,V(G)\setminus \{v_i\})\ge 2k-1$ for each $i\in\{1,2,\ldots,t_1\}$, and $e_G(U_j,V(G)\setminus U_j)\ge 2k-1$ for each $j\in\{1,2,\ldots,t_2\}$. By the handshaking lemma,
	\[
	2e_G(\pi)=\sum_{i=1}^{t_1}e_G(v_i,V(G)\setminus \{v_i\})+\sum_{j=1}^{t_2}e_G(U_j,V(G)\setminus U_j)\ge (2k-1)t.
	\]
	Combining this with Inequality \eqref{eq:partition_edges_endpoint}, we have $(2k-1)t\le 2(kt-k-1)$, and hence $t\ge 2k+2$.
	
	Since $K_{n-2k-1}$ is a proper subgraph of $H^*_{n,k}$, Lemma \ref{lem:spectral_subgraph} implies that $\rho(G)\ge \rho(H^*_{n,k})>\rho(K_{n-2k-1})=n-2k-2$. Moreover, since $\delta(G)\ge \kappa'(G)\ge 2k-1$, Lemmas \ref{lem:spectral_upper_bound} and \ref{lem:function_decreasing} imply that
	\begin{align*}
		n-2k-2<\rho(G)
		&\le \frac{\delta(G)-1}{2}+\sqrt{2e(G)-\delta(G)n+\frac{(\delta(G)+1)^2}{4}} \\
		&\le k-1+\sqrt{2e(G)-(2k-1)n+k^2}.
	\end{align*}
	Hence we have
	\begin{equation}\label{eq:edges_lower_bound_endpoint}
		e(G)>\frac{1}{2}n^2-\frac{4k+3}{2}n+4k^2+3k+\frac{1}{2}.
	\end{equation}
	
	\noindent \textbf{Claim 1.} \textit{$t_2\ge 1$.}
	
	\noindent \textit{Proof.} Suppose that $t_2=0$. Then all parts of $\pi$ are trivial, so $t_1=t=n$ and $e(G)=e_G(\pi)$. Set
	\begin{align*}
		\phi(n)
		&=\frac{1}{2}n^2-\frac{4k+3}{2}n+4k^2+3k+\frac{1}{2}-(kn-k-1)\\
		&=\frac{1}{2}n^2-\frac{6k+3}{2}n+4k^2+4k+\frac{3}{2}.
	\end{align*}
	Then $\phi(n)$ is a quadratic function in $n$ which opens upward, and its axis of symmetry is $n^*=\frac{6k+3}{2}$. Since $n\ge 4k+2$, we have $4k+2-\frac{6k+3}{2}=\frac{2k+1}{2}>0$. Hence $\phi(n)$ is increasing for $n\ge 4k+2$. Therefore, $\phi(n)\ge \phi(4k+2)=\frac{1}{2}>0$. By Inequality \eqref{eq:partition_edges_endpoint}, we have
	\[
	e(G)\le kn-k-1<\frac{1}{2}n^2-\frac{4k+3}{2}n+4k^2+3k+\frac{1}{2},
	\]
	which contradicts Inequality \eqref{eq:edges_lower_bound_endpoint}. Thus, $t_2\ge 1$ holds. \qed
	
	\noindent \textbf{Claim 2.} \textit{$t_1\ge 2k+1$.}
	
	\noindent \textit{Proof.} Suppose to the contrary that $0\le t_1\le 2k$. By Lemma \ref{lem:combinatorial_bound}, we have
	\begin{align*}
		e(G)
		&\le \binom{n-t_1-2(t_2-1)}{2}+(t_2-1)\binom{2}{2}+k(t_1+t_2)-(k+1)\\
		&=\frac{1}{2}(n-t_1-2t_2+2)(n-t_1-2t_2+1)+kt_1+(k+1)t_2-k-2\\
		&=2t_2^2-(2n-2t_1-k+2)t_2+\frac{1}{2}(n-t_1+2)(n-t_1+1)+k(t_1-1)-2\\
		&=:g(t_1,t_2).
	\end{align*}
	The Hessian matrix $H_g$ of $g(t_1,t_2)$ is
	\[
	H_g=
	\begin{pmatrix}
		\frac{\partial^2 g}{\partial t_1^2}&\frac{\partial^2 g}{\partial t_1\partial t_2}\\
		\frac{\partial^2 g}{\partial t_2\partial t_1}&\frac{\partial^2 g}{\partial t_2^2}
	\end{pmatrix}
	=
	\begin{pmatrix}
		1&2\\
		2&4
	\end{pmatrix}.
	\]
	Since $1\ge 0$ and $\det(H_g)=0$, the matrix $H_g$ is positive semi-definite. Hence $g(t_1,t_2)$ is convex on $\mathbb{R}^2$. Since each nontrivial part contains at least $2$ vertices, we have $t_1+2t_2\le n$. Moreover, by the above discussion, $t_1+t_2\ge 2k+2$. Together with $0\le t_1\le 2k$, these inequalities define a closed convex quadrilateral $Q_1$ with vertices $(0,2k+2)$, $(0,\frac{n}{2})$, $(2k,\frac{n-2k}{2})$ and $(2k,2)$.
	
	By Theorem \ref{thm:convex_max_vertex} and $n\ge 4k+2$, we have
	\begin{align*}
		e(G)
		&\le \max_{(t_1,t_2)\in Q_1} g(t_1,t_2)\\
		&= \max\left\{
		g(0,2k+2),\,
		g\left(0,\frac{n}{2}\right),\,
		g\left(2k,\frac{n-2k}{2}\right),\,
		g(2k,2)
		\right\}\\
		&=\max\bigg\{
		\frac{1}{2}n^2-\frac{8k+5}{2}n+10k^2+13k+3,\,
		\frac{1}{2}(k+1)n-k-1,\\
		&\qquad
		\frac{1}{2}(k+1)n+k^2-2k-1,\,
		\frac{1}{2}n^2-\frac{4k+5}{2}n+4k^2+6k+3
		\bigg\}\\
		&=\frac{1}{2}n^2-\frac{4k+5}{2}n+4k^2+6k+3\\
		&<\frac{1}{2}n^2-\frac{4k+3}{2}n+4k^2+3k+\frac{1}{2},
	\end{align*}
	which contradicts Inequality \eqref{eq:edges_lower_bound_endpoint}. Thus, $t_1\ge 2k+1$ holds. \qed
	
	\noindent \textbf{Claim 3.} \textit{$t=2k+2$.}
	
	\noindent \textit{Proof.} Suppose to the contrary that $t\ge 2k+3$. Then we still have $e(G)\le g(t_1,t_2)$. By the previous analysis and assumptions, we have $t_2\ge 1$, $t_1\ge 2k+1$, $t_1+2t_2\le n$ and $t_1+t_2\ge 2k+3$. These inequalities define a closed convex quadrilateral $Q_3$ with vertices $(2k+1,2)$, $(2k+1,\frac{n-2k-1}{2})$, $(n-2,1)$ and $(2k+2,1)$.
	
	By Theorem \ref{thm:convex_max_vertex} and $n\ge 4k+2$, we have
	\begin{align*}
		e(G)
		&\le \max_{(t_1,t_2)\in Q_3} g(t_1,t_2)\\
		&= \max\left\{
		g(2k+1,2),\,
		g\left(2k+1,\frac{n-2k-1}{2}\right),\,
		g(n-2,1),\,
		g(2k+2,1)
		\right\}\\
		&=\max\bigg\{
		\frac{1}{2}n^2-\frac{4k+7}{2}n+4k^2+9k+6,\,
		\frac{1}{2}(k+1)n+k^2-\frac{3}{2}k-\frac{3}{2},\\
		&\qquad
		k(n-2),\,
		\frac{1}{2}n^2-\frac{4k+5}{2}n+4k^2+7k+2
		\bigg\}\\
		&=\frac{1}{2}n^2-\frac{4k+5}{2}n+4k^2+7k+2\\
		&< \frac{1}{2}n^2-\frac{4k+3}{2}n+4k^2+3k+\frac{1}{2},
	\end{align*}
	which contradicts Inequality \eqref{eq:edges_lower_bound_endpoint}. Thus, $t=2k+2$ holds. \qed
	
	Since $t_1\ge 2k+1$ and $t_2\ge 1$, we must have $t_1=2k+1$ and $t_2=1$. Then the unique nontrivial part is $U_1$. Let $T_1=\{v_1,v_2,\dots,v_{2k+1}\}$ be the union of trivial parts.
	
	We claim that $G[U_1]\cong K_{n-2k-1}$. Otherwise, there exists a missing edge inside $U_1$. Adding this edge does not change $e_G(\pi)$, so the same partition $\pi$ still satisfies Inequality \eqref{eq:partition_edges_endpoint}. Hence, by Theorem \ref{thm:nash_williams}, the resulting graph still has at most $k-1$ edge-disjoint spanning trees. Moreover, adding an edge cannot decrease the edge connectivity. Therefore, the resulting graph remains in the same graph class, while its spectral radius is strictly larger by Lemma \ref{lem:spectral_subgraph}, contradicting the maximality of $\rho(G)$. Hence $G[U_1]\cong K_{n-2k-1}$.
	
	We claim that $e_G(\pi)=k(2k+1)-1$. Suppose that $e_G(\pi)\le k(2k+1)-2$. Then there exists a missing edge whose ends lie in distinct parts of $\pi$. Adding this edge increases $e_G(\pi)$ by one, and hence the same partition $\pi$ still satisfies $e_G(\pi)\le k(2k+1)-1<k(2k+1)$. By Theorem \ref{thm:nash_williams}, the resulting graph still has at most $k-1$ edge-disjoint spanning trees. Moreover, adding an edge cannot decrease the edge connectivity. Therefore, the resulting graph remains in the same graph class, while its spectral radius is strictly larger by Lemma \ref{lem:spectral_subgraph}, contradicting the maximality of $\rho(G)$. Thus $e_G(\pi)=k(2k+1)-1$. Since $e_G(\pi)=e_G(T_1,U_1)+e_G(T_1)$, we have $e_G(T_1,U_1)+e_G(T_1)=k(2k+1)-1$.
	
	Since $\delta(G)\ge 2k-1$, for each $i\in\{1,2,\cdots,2k+1\}$ we have $d_G(v_i)\ge 2k-1$. Summing over all vertices in $T_1$, we obtain
	\[
	\sum_{i=1}^{2k+1} d_G(v_i)=2e_G(T_1)+e_G(T_1,U_1)\ge (2k+1)(2k-1),
	\]
    which yields
	\[
	e_G(T_1)\ge (2k+1)(2k-1)-\left(k(2k-1)-1\right)=2k^2-k.
	\]
	Hence
	\[
	e_G(T_1,U_1)=k(2k+1)-1-e_G(T_1)\le 2k-1.
	\]
	On the other hand, since $G$ is $(2k-1)$-edge-connected, we have $e_G(T_1,U_1)\ge 2k-1$. Hence $e_G(T_1,U_1)=2k-1$ and $e_G(T_1)=k(2k-1)$. Hence we obtain $G\in \mathcal{H}_{n,k}$.
	
	By the maximality of $\rho(G)$, we have $\rho(G)\ge \rho(H^*_{n,k})$. On the other hand, since $G\in \mathcal{H}_{n,k}$ and $H^*_{n,k}$ has maximum spectral radius in $\mathcal{H}_{n,k}$, we have $\rho(G)\le \rho(H^*_{n,k})$. Hence $\rho(G)=\rho(H^*_{n,k})$. Therefore, $G$ is a graph in $\mathcal{H}_{n,k}$ with maximum spectral radius. This completes the proof.
	
\end{proof}

\section{Proof of Theorem \ref{thm:c1}}

Let $F'_{n,k}$ be the graph obtained from $F_{n,k}$ by deleting the edge $v_2u_{k-1}$ and adding the edge $v_1v_2$, as shown in Figure \ref{fig:Fnk&F'nk}. Before proving Theorem \ref{thm:c1}, we compare the spectral radii of $F_{n,k}$ and $F'_{n,k}$.

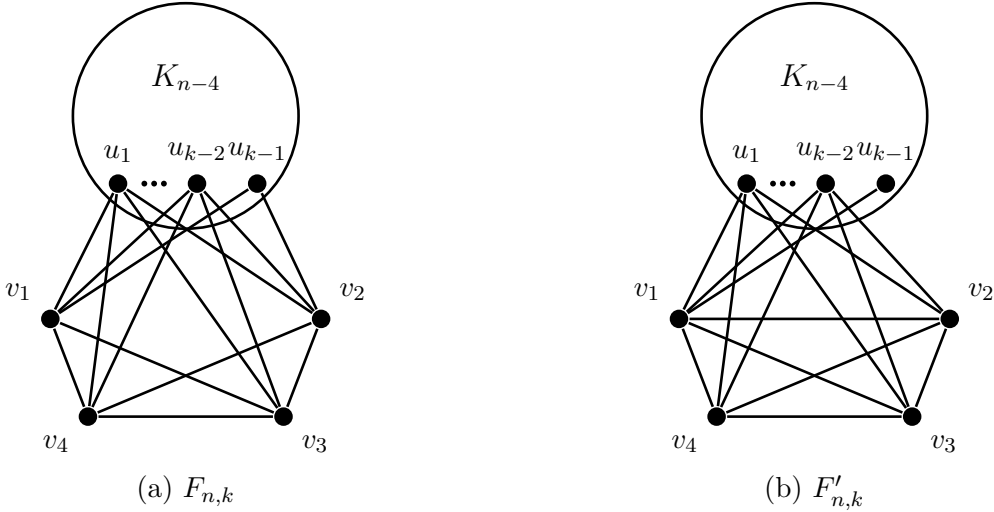
\begin{figure}[htbp]
	\centering
	\tikzset{
		vertex/.style={circle, fill=black, inner sep=0pt, minimum size=7pt},
		edot/.style={circle, fill=black, inner sep=0pt, minimum size=2.2pt}
	}
	
	\begin{minipage}[b]{0.48\textwidth}
		\centering
		\begin{tikzpicture}[line width=1pt]
			\node[vertex, label=below left:$v_4$] (v4) at (-1.3, 0) {};
			\node[vertex, label=below right:$v_3$] (v3) at 	(1.3, 0) {};
			\node[vertex, label=above left:$v_1$] (v1) at 	(-1.8, 1.3) {};
			\node[vertex, label=above right:$v_2$] (v2) at 	(1.8, 1.3) {};
			
			\draw (0, 4.0) circle (1.5cm);
			\node[font=\large] at (0, 4.5) {$K_{n-4}$};
			
			\node[vertex, label=above:$u_1$] (u1) at (-0.9, 3.10) {};
			\node[vertex, label=above:$u_{k-2}$] (u2) at (0.15, 3.10) {};
			\node[vertex, label=above:$u_{k-1}$] (u3) at (0.95, 3.10) {};
			
			\node[edot] at (-0.55, 3.10) {};
			\node[edot] at (-0.42, 3.10) {};
			\node[edot] at (-0.29, 3.10) {};
			
			\draw (v1) -- (v3);
			\draw (v1) -- (v4);
			\draw (v2) -- (v3);
			\draw (v2) -- (v4);
			\draw (v3) -- (v4);
			
			\draw (v1) -- (u1);
			\draw (v1) -- (u2);
			\draw (v1) -- (u3);
			
			\draw (v2) -- (u1);
			\draw (v2) -- (u2);
			\draw (v2) -- (u3);
			
			\draw (v3) -- (u1);
			\draw (v3) -- (u2);
			
			\draw (v4) -- (u1);
			\draw (v4) -- (u2);
		\end{tikzpicture}
		
		(a) $F_{n,k}$
	\end{minipage}
	\hfill
	\begin{minipage}[b]{0.48\textwidth}
		\centering
		\begin{tikzpicture}[line width=1pt]
			\node[vertex, label=below left:$v_4$] (v4) at (-1.3, 0) {};
			\node[vertex, label=below right:$v_3$] (v3) at (1.3, 0) {};
			\node[vertex, label=above left:$v_1$] (v1) at (-1.8, 1.3) {};
			\node[vertex, label=above right:$v_2$] (v2) at (1.8, 1.3) {};
			
			\draw (0, 4.0) circle (1.5cm);
			\node[font=\large] at (0, 4.5) {$K_{n-4}$};
			
			\node[vertex, label=above:$u_1$] (u1) at (-0.9, 3.10) {};
			\node[vertex, label=above:$u_{k-2}$] (u2) at (0.15, 3.10) {};
			\node[vertex, label=above:$u_{k-1}$] (u3) at (0.95, 3.10) {};
			
			\node[edot] at (-0.55, 3.10) {};
			\node[edot] at (-0.42, 3.10) {};
			\node[edot] at (-0.29, 3.10) {};
			
			\draw (v1) -- (v2);
			\draw (v1) -- (v3);
			\draw (v1) -- (v4);
			\draw (v2) -- (v3);
			\draw (v2) -- (v4);
			\draw (v3) -- (v4);
			
			\draw (v1) -- (u1);
			\draw (v1) -- (u2);
			\draw (v1) -- (u3);
			
			\draw (v2) -- (u1);
			\draw (v2) -- (u2);
			
			\draw (v3) -- (u1);
			\draw (v3) -- (u2);
			
			\draw (v4) -- (u1);
			\draw (v4) -- (u2);
		\end{tikzpicture}
		
		(b) $F'_{n,k}$
	\end{minipage}
	\caption{The graphs $F_{n,k}$ and $F'_{n,k}$ with vertex labels}
	\label{fig:Fnk&F'nk}
\end{figure}

\begin{lemma}\label{lem:F2_F4_compare}
	For $n \ge 3k+2$ and $k \ge 3$, we have $\rho(F_{n,k}) > \rho(F'_{n,k})$.
\end{lemma}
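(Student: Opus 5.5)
We compare the two graphs through a Rayleigh quotient taken at the Perron vector of $F'_{n,k}$. Observe that $F_{n,k}$ is obtained from $F'_{n,k}$ by deleting the edge $v_1v_2$ and adding the edge $v_2u_{k-1}$.

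Let $\mathbf{x}$ be the Perron vector of $A(F'_{n,k})$ and put $\rho'=\rho(F'_{n,k})$. Then
\[
\rho(F_{n,k})\ge \mathbf{x}^{T}A(F_{n,k})\mathbf{x}=\rho'+2x_{v_2}\left(x_{u_{k-1}}-x_{v_1}\right).
\]
Hence it suffices to prove $x_{u_{k-1}}>x_{v_1}$, since $x_{v_2}>0$ makes the last term strictly positive. (Alternatively, $x_{u_{k-1}}\ge x_{v_1}$ already suffices via Lemma \ref{lem:perron_edge_shifting}, applied with $u=u_{k-1}$, $v=v_1$, moving the neighbour $v_2$.) Strictness is automatic from the displayed inequality once the difference is positive.

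\textbf{Reducing to a few coordinates.} We use the symmetry of $F'_{n,k}$, since automorphisms preserve the Perron vector.
\begin{enumerate}
\item The vertices $u_1,\dots,u_{k-2}$ are equivalent; call their common entry $a$.
\item The vertices $u_k,\dots,u_{n-4}$ are equivalent; call their common entry $c$.
\item The vertices $v_2,v_3,v_4$ have identical closed neighbourhoods structure: each is adjacent to $u_1,\dots,u_{k-2}$ and to the other three vertices of $T$. Call their common entry $z$.
\item Write $b=x_{u_{k-1}}$ and $y=x_{v_1}$.
\end{enumerate}

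\textbf{The key difference equation.} The eigen-equations at $u_{k-1}$ and $v_1$ are
\begin{align*}
\rho' b &=(k-2)a+(n-k-3)c+y,\\
\rho' y &=(k-2)a+b+3z.
\end{align*}
Subtracting gives
\[
(\rho'+1)(b-y)=(n-k-3)c-3z.
\]
Since $n\ge 3k+2$ and $k\ge 3$, we have $n-k-3\ge 2k-1\ge 5$. So it is enough to show $z\le c$, or even a much weaker bound such as $3z<5c$.

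\textbf{The main step: comparing $z$ and $c$.} The eigen-equations at these vertices are
\begin{align*}
\rho' c &=(k-2)a+b+(n-k-4)c,\\
\rho' z &=(k-2)a+y+2z.
\end{align*}
Subtracting gives
\[
(\rho'-2)(c-z)=b-y+(n-k-6)c.
\]
\begin{itemize}
\item If $b\ge y$, we are already done, so assume $b<y$.
\item Then I would bound $y$ crudely by $y\le\big((k-2)a+b+3z\big)/\rho'$. I would also use $\rho'>n-5$, which holds because $K_{n-4}$ is a proper subgraph of $F'_{n,k}$ (Lemma \ref{lem:spectral_subgraph}).
\item Using these, I expect to show that $(n-k-6)c$ dominates $y-b$. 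Hence $c>z$, and then the difference equation forces $b>y$, a contradiction.
\end{itemize}

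A cleaner route avoids the case split: solve the two difference equations jointly as a $2\times 2$ linear system in $b-y$ and $c-z$. Its coefficients are positive for $\rho'>n-5\ge 2k+1$, and both right-hand sides are then positive multiples of $c$.

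The main obstacle is this inequality juggling: it must handle the coupling between $b-y$ and $c-z$ without needing exact Perron entries, and the hypothesis $n\ge 3k+2$ is what should guarantee $n-k-6>0$ together with enough slack.
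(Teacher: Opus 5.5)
Your proposal is correct and follows essentially the paper's route: the same reduction to $x_{u_{k-1}}\ge x_{v_1}$ for the Perron vector of $F'_{n,k}$, the same class reduction and identity $(\rho'+1)(b-y)=(n-k-3)c-3z$, and the same four eigen-equations. The only difference is the finish: your joint solve gives $b-y=\frac{(\rho'+1)(n-k-6)}{\rho'^2-\rho'-5}\,c>0$ directly, where the paper derives the equivalent contradiction from two competing bounds on $x_u/x_v$. (Minor slip: $n-5\ge 2k+1$ fails for $k=3$, $n=11$, but you only need $\rho'^2-\rho'-5>0$, which follows from $\rho'>n-5\ge 6$.)
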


\begin{proof}
	Let $\mathbf{x}$ be the Perron vector of $A(F'_{n,k})$, and let $\mathbf{x}_w$ denote the entry of $\mathbf{x}$ corresponding to a vertex $w \in V(F'_{n,k})$. Since
	\[
	F_{n,k}=F'_{n,k}-v_1v_2+v_2u_{k-1},
	\]
	we may apply Lemma \ref{lem:perron_edge_shifting} with $u=u_{k-1}$, $v=v_1$, and $\{v_2\}\subseteq N_{F'_{n,k}}(v_1)\setminus N_{F'_{n,k}}(u_{k-1})$. Thus, it suffices to show that $\mathbf{x}_{u_{k-1}} \ge \mathbf{x}_{v_1}$. For simplicity, we write $\rho=\rho(F'_{n,k})$.
	
	The eigenequations at $v_2$ and $v_3$ are
	\begin{align*}
		\rho \mathbf{x}_{v_2} &= \sum_{l=1}^{k-2} \mathbf{x}_{u_l} + \mathbf{x}_{v_1} + \mathbf{x}_{v_3} + \mathbf{x}_{v_4}, \\
		\rho \mathbf{x}_{v_3} &= \sum_{l=1}^{k-2} \mathbf{x}_{u_l} + \mathbf{x}_{v_1} + \mathbf{x}_{v_2} + \mathbf{x}_{v_4}.
	\end{align*}
	Subtracting these two equations, we obtain
	\[
	(\rho+1)(\mathbf{x}_{v_3}-\mathbf{x}_{v_2})=0.
	\]
	Since $\rho>0$, it follows that $\mathbf{x}_{v_2}=\mathbf{x}_{v_3}$. By symmetry, we have $\mathbf{x}_{v_2} = \mathbf{x}_{v_3} = \mathbf{x}_{v_4}$ and $\mathbf{x}_{u_k} = \mathbf{x}_{u_{k+1}} = \dots = \mathbf{x}_{u_{n-4}}$. Let $\mathbf{x}_v = \mathbf{x}_{v_2} = \mathbf{x}_{v_3} = \mathbf{x}_{v_4}$ and $\mathbf{x}_u = \mathbf{x}_{u_k} = \mathbf{x}_{u_{k+1}} = \dots = \mathbf{x}_{u_{n-4}}$.
	
	The eigenequations simplify to
	\begin{align*}
		\rho \mathbf{x}_v &= \sum_{l=1}^{k-2} \mathbf{x}_{u_l} + \mathbf{x}_{v_1} + 2\mathbf{x}_v
		\implies \sum_{l=1}^{k-2} \mathbf{x}_{u_l} + \mathbf{x}_{v_1} = (\rho - 2)\mathbf{x}_v, \\
		\rho \mathbf{x}_u &= \sum_{l=1}^{k-1} \mathbf{x}_{u_l} + (n - k - 4)\mathbf{x}_u
		\implies \sum_{l=1}^{k-1} \mathbf{x}_{u_l} = (\rho - n + k + 4)\mathbf{x}_u.
	\end{align*}
	
	Consequently, the eigenequations at $v_1$ and $u_{k-1}$ can be written as
	\begin{align}
		\rho \mathbf{x}_{v_1}
		&= \sum_{l=1}^{k-1} \mathbf{x}_{u_l} + \sum_{i=2}^4 \mathbf{x}_{v_i}
		= (\rho-n+k+4)\mathbf{x}_u + 3\mathbf{x}_v, \label{eq:Fprime-v1}\\
		\rho \mathbf{x}_{u_{k-1}}
		&= \mathbf{x}_{v_1} + \sum_{l=1}^{k-2} \mathbf{x}_{u_l} + \sum_{i=k}^{n-4} \mathbf{x}_{u_i}
		= (\rho-2)\mathbf{x}_v + (n-k-3)\mathbf{x}_u. \label{eq:Fprime-uk-1}
	\end{align}
	
	Subtracting the eigenequation at $v_1$ from that at $u_{k-1}$ gives
	\[
	\rho(\mathbf{x}_{u_{k-1}}-\mathbf{x}_{v_1})
	=
	\mathbf{x}_{v_1}-\mathbf{x}_{u_{k-1}} + (n-k-3)\mathbf{x}_u - 3\mathbf{x}_v,
	\]
	and hence
	\[
	(\rho+1)(\mathbf{x}_{u_{k-1}}-\mathbf{x}_{v_1})=(n-k-3)\mathbf{x}_u-3\mathbf{x}_v.
	\]
	
	Assume to the contrary that $\mathbf{x}_{u_{k-1}}<\mathbf{x}_{v_1}$. Then, by the above identity and $\rho+1>0$, we obtain $(n-k-3)\mathbf{x}_u-3\mathbf{x}_v<0$. Hence,

	\begin{equation}\label{eq:xu_upper_general}
		\mathbf{x}_u<\frac{3}{n-k-3}\mathbf{x}_v.
	\end{equation}
		
	On the other hand, subtracting \eqref{eq:Fprime-v1} from \eqref{eq:Fprime-uk-1}, we obtain
	\[
	\rho(\mathbf{x}_{u_{k-1}}-\mathbf{x}_{v_1})
	=
	(\rho-5)\mathbf{x}_v + (2n-2k-7-\rho)\mathbf{x}_u.
	\]
	By our assumption $\mathbf{x}_{u_{k-1}}<\mathbf{x}_{v_1}$, we obtain $(\rho-5)\mathbf{x}_v+(2n-2k-7-\rho)\mathbf{x}_u <0$. Notice that $K_{n-4}$ is a proper subgraph of $F'_{n,k}$, so by Lemma \ref{lem:spectral_subgraph}, we have $\rho > \rho(K_{n-4})=n-5$. Since $n\ge 3k+2 \ge 11$, it follows that $5-\rho <5-(n-5)=10-n\le -1<0$. Thus,
	\[
	(2n-2k-7-\rho)\mathbf{x}_u<(5-\rho)\mathbf{x}_v < 0.
	\]
	Since $\mathbf{x}_u>0$, we have $2n-2k-7-\rho <0$, and hence
	\begin{equation}\label{eq:xu_lower_general}
		\mathbf{x}_u>\frac{5-\rho}{2n-2k-7-\rho}\mathbf{x}_v.
	\end{equation}
	
	Combining \eqref{eq:xu_upper_general} and \eqref{eq:xu_lower_general}, we obtain
	\[
	\frac{5-\rho}{2n-2k-7-\rho}<\frac{3}{n-k-3}.
	\]
	However,
	\[
	\frac{5-\rho}{2n-2k-7-\rho}-\frac{3}{n-k-3}
	=
	\frac{(k+6-n)(\rho+1)}{(2n-2k-7-\rho)(n-k-3)}.
	\]
	Here $k+6-n<0$ because $n \ge 3k+2$ and $k \ge 3$, while $\rho+1>0$, $2n-2k-7-\rho<0$ and $n-k-3>0$. Hence the right-hand side is positive, a contradiction.
	
	Therefore, $\mathbf{x}_{u_{k-1}} \ge \mathbf{x}_{v_1}$. By Lemma \ref{lem:perron_edge_shifting}, we conclude that $\rho(F_{n,k})>\rho(F'_{n,k})$.

	This completes the proof.
\end{proof}

\begin{lemma}\label{(k+1)_edge_connected}
	Let $k \ge 3$ and $n \ge 3k+2$. Let $G$ be a graph with vertex partition
	$V(G)=U\cup T$, where $|U|=n-4$ and $|T|=4$. If $G[U]$ is a clique and
	$d_G(v)\ge k+1$ for every $v\in T$, then $G$ is $(k+1)$-edge-connected.
\end{lemma}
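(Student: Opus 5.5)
Take an arbitrary nonempty proper subset $S\subsetneq V(G)$ and show $e_G(S,\overline S)\ge k+1$, where $\overline S=V(G)\setminus S$. By replacing $S$ with its complement if necessary, assume $|S\cap U|\le |\overline S\cap U|$. Since $|U|=n-4\ge 3k-2$, this gives $|\overline S\cap U|\ge (n-4)/2$. Let $a=|S\cap U|$ and $b=|S\cap T|$, so $0\le b\le 4$, and split into two cases according to whether $a=0$ or $a\ge 1$.

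\emph{Case $a\ge 1$.} Because $G[U]$ is a clique, every vertex of $S\cap U$ is adjacent to every vertex of $\overline S\cap U$. Hence
\[
e_G(S,\overline S)\ge a\,(n-4-a)\ge n-5\ge 3k-3\ge k+1,
\]
using $1\le a\le (n-4)/2$ (so the product is minimized at $a=1$) together with $k\ge 3$. This case is therefore immediate.

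\emph{Case $a=0$.} Now $S\subseteq T$ with $1\le b\le 4$. Each $v\in S$ has $d_G(v)\ge k+1$, and at most $b-1$ of its neighbours lie in $S$, so
\[
e_G(S,\overline S)=\sum_{v\in S}d_G(v)-2e_G(S)\ge b(k+1)-b(b-1)=b(k+2-b).
\]
It remains to check $b(k+2-b)\ge k+1$ for each $b\in\{1,2,3,4\}$. For $b=1$ this is equality. For $b=2$ it reads $2k\ge k+1$, true. For $b=3$ it reads $3k-3\ge k+1$, i.e.\ $k\ge 2$, true. For $b=4$ it reads $4k-8\ge k+1$, i.e.\ $k\ge 3$, which is exactly where the hypothesis $k\ge 3$ is used. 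If some boundary case turns out too tight, one can fall back on the bound $e_G(S,\overline S)\ge \sum_{v\in S}d_U(v)$, since $U\subseteq\overline S$ here, but the degree count should already suffice.

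The only point needing care is the normalization at the start: the complement of a set containing few $U$-vertices may contain many of them, so choosing the side with fewer $U$-vertices is what reduces everything to these two cases. The $b=4$ check in the second case is the tight spot that uses $k\ge 3$. Combining the two cases, every edge cut of $G$ has at least $k+1$ edges, so $G$ is $(k+1)$-edge-connected.
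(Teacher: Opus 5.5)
Your proof is correct and follows essentially the same route as the paper: you split according to whether $U$ meets both sides of the cut, where the clique gives at least $n-5\ge k+1$ edges, or lies entirely on one side, where the degree count gives $r(k+2-r)\ge k+1$ for $r\le 4$ using $k\ge 3$. The only differences are cosmetic. You normalize via the complement, and you absorb singleton cuts into the two cases instead of dispatching them first with $\delta(G)\ge k+1$. Your closing fallback remark is unnecessary, since the degree count already suffices in every case.
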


\begin{proof}
	Let $S\subset V(G)$ be a nonempty proper subset, and denote $\bar S=V(G)\setminus S$.
	It suffices to show that $e_G(S,\bar S)\ge k+1$.
	
	If $v\in T$, then $d_G(v)\ge k+1$. If $u\in U$, then $d_G(u)\ge n-5$ since $G[U]$ is a clique, and $n-5\ge 3k-3>k+1$. Hence $\delta(G)\ge k+1$. In particular, if $|S|=1$ or $|\bar S|=1$, then $e_G(S,\bar S)\ge \delta(G)\ge k+1$.
	
	Thus, we may assume that $|S|\ge 2$ and $|\bar S|\ge 2$. There are two cases.
	
	Suppose first that $U\cap S\neq\emptyset$ and $U\cap \bar S\neq\emptyset$. Let
	$x=|U\cap S|$ and $y=|U\cap \bar S|$. Then $x,y\ge 1$ and $x+y=n-4$, so
	$1\le x\le n-5$. Since $G[U]$ is a clique, every edge between $U\cap S$ and
	$U\cap \bar S$ is present. Therefore
	\[
	e_G(S,\bar S)\ge xy=x(n-4-x)\ge n-5\ge 3k-3>k+1.
	\]
	
	Now suppose that $U\subseteq S$ or $U\subseteq \bar S$. By symmetry, it suffices to consider the case $U\subseteq S$. Then $\bar S\subseteq T$ and $2\le |\bar S|\le 4$. Let $r=|\bar S|$. Since each vertex of $\bar S$ has degree at least $k+1$ and $G[\bar S]$ has at most $\binom r2$ edges, we have
	\[
	e_G(S,\bar S)=\sum_{v\in\bar S} d_G(v)-2e_G(\bar S)
	\ge r(k+1)-2\binom r2 = r(k-r+2).
	\]
	If $r=2$, then $e_G(S,\bar S)\ge 2k$; if $r=3$, then $e_G(S,\bar S)\ge 3k-3$; and if $r=4$, then $e_G(S,\bar S)\ge 4k-8$. Since $k\ge 3$, all three lower bounds are at least $k+1$.
	
	Therefore $e_G(S,\bar S)\ge k+1$ for every nontrivial edge cut $E_G(S,\bar S)$, and hence $G$ is $(k+1)$-edge-connected.
\end{proof}

By Lemma \ref{(k+1)_edge_connected}, both $F_{n,k}$ and $F'_{n,k}$ are $(k+1)$-edge-connected. We are now ready to prove Theorem \ref{thm:c1}.

\begin{proof}[Proof of Theorem \ref{thm:c1}]
	By Theorem \ref{thm:k+c} with $c=1$, it remains to show that $F_{n,k}$ is the unique graph in $\mathcal{H}_{n,k}^{1}$ with maximum spectral radius. Indeed, when $c=1$, we have $n\ge (c+2)k-c^2+c+2=3k+2$, and every graph $H\in \mathcal{H}_{n,k}^{1}$ has a partition $V(H)=U\cup T$ such that $|U|=n-4$, $|T|=4$, $H[U]\cong K_{n-4}$, $5\le e_H(T)\le 6$ and $e_H(T,U)=4k-1-e_H(T)$.
	
	Let $G$ be a graph in $\mathcal{H}_{n,k}^{1}$ with maximum spectral radius. Then there exists a partition $V(G)=U\cup T$ such that $|U|=n-4$, $|T|=4$, $G[U]\cong K_{n-4}$, $5\le e_G(T)\le 6$ and $e_G(T,U)=4k-1-e_G(T)$. Let $T=\{v_1,v_2,v_3,v_4\}$ and $U=\{u_1,u_2,\ldots,u_{n-4}\}$. Let $\mathbf{x}$ be the Perron vector of $A(G)$, and assume, without loss of generality, that $\mathbf{x}_{u_1}\ge \mathbf{x}_{u_2}\ge \cdots \ge \mathbf{x}_{u_{n-4}}$.
	
	\noindent \textbf{Claim 1.} \textit{For any vertex $v\in T$, if $d_U(v)=s$, then $N_U(v)=\{u_1,u_2,\ldots,u_s\}$.}
	
	\noindent \textit{Proof.} Suppose that there exist $1\le r<j\le n-4$ such that $vu_j\in E(G)$ and $vu_r\notin E(G)$. Let $G'=G-vu_j+vu_r$. Then $G'[U]\cong K_{n-4}$, $e_{G'}(T)=e_G(T)$ and $e_{G'}(T,U)=e_G(T,U)$. Moreover, the degrees of the vertices in $T$ are unchanged, so $G'$ is still $(k+1)$-edge-connected by Lemma \ref{(k+1)_edge_connected}. Hence $G'\in \mathcal{H}_{n,k}^{1}$. Since $\mathbf{x}_{u_r}\ge \mathbf{x}_{u_j}$, Lemma \ref{lem:perron_edge_shifting} yields $\rho(G')>\rho(G)$, contradicting the maximality of $\rho(G)$. Thus, the claim holds. \qed
	
	We now discuss the two cases $e_G(T)=6$ and $e_G(T)=5$, respectively.
	
	\noindent \textbf{Case 1.} $e_G(T)=6$.
	
	In this case, $G[T]\cong K_4$ and $e_G(T,U)=4k-7$. Since $\delta(G)\ge k+1$, each vertex of $T$ sends at least $k-2$ edges to $U$. Hence exactly one vertex of $T$ sends $k-1$ edges to $U$, and each of the other three vertices sends $k-2$ edges to $U$. Without loss of generality, assume that $d_U(v_1)=k-1$ and $d_U(v_i)=k-2$ for $i\in\{2,3,4\}$. By Claim 1, we have
	\[
	E_G(T,U)=\{v_1u_\ell:1\le \ell\le k-1\}\cup \{v_i u_\ell:i\in\{2,3,4\},\,1\le \ell\le k-2\}.
	\]
	Hence $G\cong F'_{n,k}$.
	
	\noindent \textbf{Case 2.} $e_G(T)=5$.
	
	Without loss of generality, assume that $G[T]\cong K_4-v_1v_2$. Then $e_G(T,U)=4k-6$. Since $d_T(v_1)=d_T(v_2)=2$ and $d_T(v_3)=d_T(v_4)=3$, the condition $\delta(G)\ge k+1$ implies that $v_1$ and $v_2$ each send at least $k-1$ edges to $U$, while $v_3$ and $v_4$ each send at least $k-2$ edges to $U$. Since the total number of such edges is exactly $4k-6$, it follows that $d_U(v_1)=d_U(v_2)=k-1$ and $d_U(v_3)=d_U(v_4)=k-2$. By Claim 1, we have
	\[
	E_G(T,U)=\{v_i u_\ell:i\in\{1,2\},\,1\le \ell\le k-1\}\cup \{v_i u_\ell:i\in\{3,4\},\,1\le \ell\le k-2\}.
	\]
	Hence $G\cong F_{n,k}$.
	
	From the above analysis, the graph $G$ is isomorphic to either $F_{n,k}$ or $F'_{n,k}$. By Lemma \ref{lem:F2_F4_compare}, we have $\rho(F_{n,k})>\rho(F'_{n,k})$. Since $G$ has maximum spectral radius in $\mathcal{H}_{n,k}^{1}$, we conclude that $G\cong F_{n,k}$. Therefore, $F_{n,k}$ is the unique graph in $\mathcal{H}_{n,k}^{1}$ with maximum spectral radius. This completes the proof.
\end{proof}

\section{Concluding remarks}

We end this paper with some remarks. We first recall the result of Zhang and Fan \cite{Zhang2026} on $k$-edge-connected graphs.

\begin{theorem}[\cite{Zhang2026}]\label{thm:c0}
	Let $k\ge 2$, and let $G$ be a $k$-edge-connected graph of order $n\ge 2k+3$. If $\rho(G)\ge \rho\left(K_{k-1}\vee (K_{n-k-1}\cup K_2)\right)$, then $G$ contains $k$ edge-disjoint spanning trees unless $G\cong K_{k-1}\vee (K_{n-k-1}\cup K_2)$.
\end{theorem}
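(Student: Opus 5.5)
The approach follows the template of the proofs of Theorems \ref{thm:k+c} and \ref{thm:k-1}, with $c=0$. Let $G$ have maximum spectral radius among $k$-edge-connected graphs of order $n\ge 2k+3$ with $\tau(G)\le k-1$. The graph $K_{k-1}\vee(K_{n-k-1}\cup K_2)$ belongs to this class. It is $k$-edge-connected: each vertex of the $K_2$ has degree $k$, and every other cut is large. The partition consisting of $V(K_{n-k+1}\text{ part})$ together with the two vertices of $K_2$ as singletons has $e(\pi)=2(k-1)+1=2k-1<2k$. Hence $\rho(G)\ge\rho(K_{k-1}\vee(K_{n-k-1}\cup K_2))>n-3$, since the graph contains $K_{n-2}$ properly. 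Theorem \ref{thm:nash_williams} gives a partition $\pi$ with $t$ parts and $e_G(\pi)\le kt-k-1$. Edge-connectivity gives $2e_G(\pi)\ge kt$, which forces $t\ge 3$.

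Next, Lemmas \ref{lem:spectral_upper_bound} and \ref{lem:function_decreasing} with $\delta\ge k$ and $\rho>n-3$ yield a lower bound $e(G)>\frac12 n^2-\frac52 n+k+3$, roughly. I then rule out the configurations other than one big part plus two singletons. First, $t_2=0$ is impossible, since $e(G)\le kn-k-1$ is too small for $n\ge 2k+3$. Second, bounding $e(G)\le \binom{n-t+1}{2}+k(t-1)-1$ in the worst case via Lemma \ref{lem:combinatorial_bound}/Lemma \ref{lem:sum_bound}, together with convexity in $t$ (Theorem \ref{thm:convex_max_vertex}), I expect to show that $t\ge 4$ contradicts the edge bound. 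So $t=3$. For $t=3$ one also needs the two small parts to be singletons. If the two smaller parts have total size $r\ge 3$, Lemma \ref{lem:sum_bound} gives $e(G)\le\binom{n-3}{2}+\binom{2}{2}+2k-1$, which should fall below the lower bound. If $r=2$, the parts are $X_1$, $\{a\}$, $\{b\}$.

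Then the structure is forced. Adding missing edges inside $X_1$ keeps the class, so $G[X_1]\cong K_{n-2}$. Similarly, saturation gives $e_G(\pi)=2k-1$. Since $d(a),d(b)\ge k$, we get $d(a)+d(b)=2e(a,b)+e(\{a,b\},X_1)\ge 2k$. Combined with $e(\{a,b\},X_1)+e(a,b)=2k-1$, this gives $ab\in E(G)$ and each of $a,b$ has exactly $k-1$ neighbours in $X_1$.

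The remaining step, and the main obstacle, is maximizing $\rho$ over the placement of these neighbourhoods. Order $X_1$ by Perron entries. Using Lemma \ref{lem:perron_edge_shifting} as in Claim~1 of the proof of Theorem \ref{thm:c1}, each $N_{X_1}(a)$ and $N_{X_1}(b)$ must be an initial segment. Therefore both equal $\{u_1,\dots,u_{k-1}\}$, which yields exactly $K_{k-1}\vee(K_{n-k-1}\cup K_2)$. This argument needs edge-connectivity to be preserved under shifting, which holds because degrees in $T$ are unchanged and the clique part is large (an analogue of Lemma \ref{(k+1)_edge_connected}). Strictness of Lemma \ref{lem:perron_edge_shifting} gives uniqueness. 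The delicate numerical checks are the $t\ge4$ and $r\ge3$ cases near $n=2k+3$, which I would verify at the extreme points as in the proofs above.
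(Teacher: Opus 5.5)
Your proposal is correct and follows essentially the route the paper indicates for this cited result: the proof of Theorem~\ref{thm:k+c} specialized to $c=0$, which the concluding remarks note needs $n\ge 2k+3$ precisely for the $t_2=0$ case, where $\phi(2k+3)=1$. You then complete it with the Perron-vector shifting of Claim~1 in the proof of Theorem~\ref{thm:c1}, which forces $N_{X_1}(a)=N_{X_1}(b)=\{u_1,\dots,u_{k-1}\}$. The numerical checks you deferred do go through: the convex bound $\binom{n-t+1}{2}+kt-k-1$ falls below $\frac12 n^2-\frac52 n+k+3$ by $n-2k-2\ge 1$ at $t=4$ and by $\phi(n)\ge 1$ at $t=n$, and the $r\ge 3$ bound falls short by $n-k-3$. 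One small slip: your ``$K_{n-k+1}$ part'' should be the $K_{n-2}=K_{k-1}\vee K_{n-k-1}$ part.
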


The extremal graph $K_{k-1}\vee (K_{n-k-1}\cup K_2)$ in Theorem \ref{thm:c0} can be viewed as a large clique $K_{n-2}$ together with two vertices outside this clique. If we denote these two vertices by $T=\{v_1,v_2\}$, then $e(T)=1$, and there are exactly $2k-2$ edges joining $T$ and the clique $K_{n-2}$.

It is easy to see that if we set $c=0$ in Theorem \ref{thm:k+c}, then the resulting extremal structure is consistent with Theorem \ref{thm:c0}. The difference lies in the lower bound on $n$. In Theorem \ref{thm:k+c}, the condition $n\ge (c+2)k-c^2+c+2$ becomes $n\ge 2k+2$ when $c=0$. However, in the proof of Claim 1, the corresponding function $\phi(n)$ is increasing for $n\ge 2k+2$, while $\phi(2k+2)=1-k<0$ and $\phi(2k+3)=1>0$. Hence, in the case $c=0$, we need the stronger condition $n\ge 2k+3$ to obtain the required contradiction. In other words, if the lower bound of $n$ in Theorem \ref{thm:k+c} is replaced by $n\ge (c+2)k-c^2+c+3$, then Theorem \ref{thm:k+c} holds for all $0\le c\le k-2$.

Recall Theorem \ref{thm:k-1}. Let $k=2$, we obtain $t_1=5$, $e_G(T)=6$ and $e_G(T,U)=3$. This matches the exact edge distribution in the unique extremal graph given in Theorem \ref{thm1}.

It is foreseeable that determining the concrete extremal graph becomes much harder as $c$ becomes larger. The number of possible candidate graphs increases and comparing their spectral radii is not easy. For $2\le c\le k-1$, it remains interesting to determine concrete extremal graphs corresponding to the tight spectral radius condition for $(k+c)$-edge-connected graphs to contain $k$ edge-disjoint spanning trees. We leave this question for future work.

\section*{Declaration of competing interest}
\quad\quad The authors declare that they have no known competing financial interests or personal relationships that could have appeared to influence the work reported in this paper.

\section*{Data availability}
\quad\quad No data was used for the research described in the article.

\end{document}